\documentclass{article}%
\usepackage{graphicx}
\usepackage{amsmath}
\usepackage{amsfonts}
\usepackage{amssymb}%
\providecommand{\U}[1]{\protect\rule{.1in}{.1in}}
\providecommand{\U}[1]{\protect\rule{.1in}{.1in}}
\providecommand{\U}[1]{\protect\rule{.1in}{.1in}}
\providecommand{\U}[1]{\protect\rule{.1in}{.1in}}
\providecommand{\U}[1]{\protect\rule{.1in}{.1in}}
\providecommand{\U}[1]{\protect\rule{.1in}{.1in}}
\providecommand{\U}[1]{\protect\rule{.1in}{.1in}}
\providecommand{\U}[1]{\protect\rule{.1in}{.1in}}
\providecommand{\U}[1]{\protect\rule{.1in}{.1in}}
\providecommand{\U}[1]{\protect\rule{.1in}{.1in}}
\providecommand{\U}[1]{\protect\rule{.1in}{.1in}}
\providecommand{\U}[1]{\protect\rule{.1in}{.1in}}
\providecommand{\U}[1]{\protect\rule{.1in}{.1in}}
\providecommand{\U}[1]{\protect\rule{.1in}{.1in}}
\providecommand{\U}[1]{\protect\rule{.1in}{.1in}}
\providecommand{\U}[1]{\protect\rule{.1in}{.1in}}
\providecommand{\U}[1]{\protect\rule{.1in}{.1in}}
\providecommand{\U}[1]{\protect\rule{.1in}{.1in}}
\providecommand{\U}[1]{\protect\rule{.1in}{.1in}}
\providecommand{\U}[1]{\protect\rule{.1in}{.1in}}
\providecommand{\U}[1]{\protect\rule{.1in}{.1in}}
\providecommand{\U}[1]{\protect\rule{.1in}{.1in}}
\providecommand{\U}[1]{\protect\rule{.1in}{.1in}}
\providecommand{\U}[1]{\protect\rule{.1in}{.1in}}
\providecommand{\U}[1]{\protect\rule{.1in}{.1in}}
\providecommand{\U}[1]{\protect\rule{.1in}{.1in}}
\providecommand{\U}[1]{\protect\rule{.1in}{.1in}}
\providecommand{\U}[1]{\protect\rule{.1in}{.1in}}
\providecommand{\U}[1]{\protect\rule{.1in}{.1in}}

\newtheorem{theorem}{Theorem}
{}

\newtheorem{case}{Case}

\newtheorem{corollary}{Corollary}

\newtheorem{definition}{Definition}

{}
\newtheorem{notation}{Notation}

\newtheorem{proposition}{Proposition}
\newtheorem{remark}{Remark}

\newtheorem{summary}{Summary}
\newenvironment{proof}[1][Proof]{\textbf{#1.} }{\ \rule{0.5em}{0.5em}}

\begin{document}

\title{On the Finite-Zone PT-symmetric Dirac Operators}
\author{O. A. Veliev\\{\small \ Department of Mechanical Engineering, Dogus University, Istanbul,
Turkey }\\\ {\small e-mail: oveliev@dogus.edu.tr}}
\date{}
\maketitle

\begin{abstract}
In this paper we find conditions on the PT-symmetric non-self-adjoint Dirac
operator $L(Q)$ with $2\times2$ matrix-valued coefficients $Q$ whose entries
are PT-symmetric periodic functions for which the number of gaps in the real
part of its spectrum is finite.

Key Words: Schrodinger operator, PT-symmetric potential, finite-zone potentials.

AMS Mathematics Subject Classification: 34L05, 34L20.

\end{abstract}

\section{ Introduction}

In this paper, we consider the spectrum of the one-dimensional Dirac operator
$L(P)$ generated in the space $L_{2}^{2}(-\infty,\infty)$ by the differential
expression
\begin{equation}
l(P)=\left(
\begin{array}
[c]{cc}%
-i & 0\\
0 & i
\end{array}
\right)  \mathbf{y}^{^{\prime}}(x)+P\mathbf{y}(x), \tag{1}%
\end{equation}
where $P(x)=\left(
\begin{array}
[c]{cc}%
p_{1}(x) & p_{2}(x)\\
p_{3}(x) & p_{4}(x)
\end{array}
\right)  $ and $p_{j}$ for $j=1,2,3,4$ are the complex-valued functions
satisfying the conditions
\begin{equation}
p_{j}\in L_{1}[0,\pi],\text{ }p_{j}\left(  x+\pi\right)  =p_{j}\left(
x\right)  ,\text{ }p_{j}\left(  -x\right)  =\overline{p_{j}\left(  x\right)
}, \tag{2}%
\end{equation}
for all $x\in\mathbb{R}$. Here $\mathbf{y}=(y_{1},y_{2})^{T}$ is a
vector-valued function. For $-\infty\leq a<b\leq\infty,$ $L_{2}^{2}(a,b)$
denotes the space of vector-valued functions $\mathbf{f}=\left(  f_{1}%
,f_{2}\right)  ^{T}$ equipped with the norm and inner product%
\[
\left\Vert \mathbf{f}\right\Vert =\left(  \int\nolimits_{(a,b)}\left\vert
\mathbf{f}\left(  x\right)  \right\vert ^{2}dx\right)  ^{\frac{1}{2}},\text{
}(\mathbf{f,g})=\int\nolimits_{(a,b)}\left\langle \mathbf{f}\left(  x\right)
,\mathbf{g}\left(  x\right)  \right\rangle dx,
\]
where $\left\vert \cdot\right\vert $ and $\left\langle \cdot,\cdot
\right\rangle $ denote the norm and inner product in $\mathbb{C}^{2},$ respectively.

This paper can be considered as continuation of [15]. In [15], we first proved
that $L(P)$ is a PT-symmetric operator, where we used the following definition
(see for example [1]).

\begin{definition}
We say that the operator $L$ is PT-symmetric if
\[
PTL\mathbf{y}=LPT\mathbf{y}%
\]
for all $\mathbf{y}\in D(L),$ where the space-reflection (parity) operator $P$
and the complex-conjugation operator $T$ are defined by $(P\mathbf{y}%
)(x)=\mathbf{y}(-x)$ and $(T\mathbf{y})(x)=\overline{\mathbf{y}(x)}.$
\end{definition}

We then proved that the main part of the spectrum $\sigma(L(P))$ is real and
contains a large part of the real axis. In the present paper, we obtain
necessary and sufficient conditions for the number of gaps in $\sigma(L(P))$
to be finite. Note that if (2) holds, then, as observed in [15], the spectrum
of $L(P)$ is a shift of the spectrum of the operator $L(Q)$, by some real
number $\gamma,$ where $Q$ has the form
\begin{equation}
Q(x)=\left(
\begin{array}
[c]{cc}%
0 & q_{2}(x)\\
q_{3}(x) & 0
\end{array}
\right)  ,\text{ }q_{j}\in L_{1}[0,\pi],\text{ }q_{j}\left(  x+\pi\right)
=q_{j}\left(  x\right)  ,\text{ }q_{j}\left(  -x\right)  =\overline
{q_{j}\left(  x\right)  }.\tag{3}%
\end{equation}
Therefore, it is sufficient to consider the operator $L(Q).$

It is well-known [5-7, 10] that the spectrum $\sigma(L(Q))$ of the operator
$L(Q)$ is the union of the spectra $\sigma(L_{t}(Q))$ of the operators
$L_{t}(Q),$ for $t\in(-1,1],$ generated in $L_{2}^{2}[0,\pi]$ by (1) and the
quasiperiodic boundary condition
\begin{equation}
\mathbf{y}(\pi)=e^{i\pi t}\mathbf{y}(0).\tag{4}%
\end{equation}
The spectrum of $L_{t}(Q)$ consists of eigenvalues, which are called the Bloch
eigenvalues of $L(Q).$ The eigenvalues of $L_{0}(Q)$ and $L_{1}(Q)$ are called
periodic and antiperiodic eigenvalues respectively. Both are referred to as
$2\pi$- periodic eigenvalues.

Let us briefly describe the organization of this paper. In Section 2, using
some results of [15], formulated as Summary 1, we obtain asymptotic estimates
for all Bloch eigenvalues. For this purpose, we use the asymptotic formulas
for the matrix
\begin{equation}
E(x,\lambda)=\left(
\begin{array}
[c]{cc}%
e_{11}(x,\lambda) & e_{12}(x,\lambda)\\
e_{21}(x,\lambda) & e_{22}(x,\lambda)
\end{array}
\right)  \tag{5}%
\end{equation}
of the fundamental system of solutions for the equation
\begin{equation}
\left(
\begin{array}
[c]{cc}%
-i & 0\\
0 & i
\end{array}
\right)  \mathbf{y}^{^{\prime}}(x)+Q(x)\mathbf{y}(x)=\lambda\mathbf{y}%
(x),\tag{6}%
\end{equation}
with the initial conditions $E(0,\lambda)=I,$ obtained in [8]. In [8], it was
proved that
\begin{align}
e_{11}(x,\lambda) &  =e^{i\lambda x}+\rho_{11}(x,\lambda),\text{ }%
e_{12}(x,\lambda)=\rho_{11}(x,\lambda),\text{ }\tag{7}\\
e_{21}(x,\lambda) &  =\rho_{21}(x,\lambda),\text{ }e_{22}(x,\lambda
)=e^{-i\lambda x}+\rho_{22}(x,\lambda)\nonumber
\end{align}
and $\rho_{kl}(x,\lambda)\rightarrow0,$ as $\lambda\rightarrow\infty$ \ for
$k,l\in\left\{  1,2\right\}  .$ Moreover, these estimates hold uniformly with
respect to $x\in\lbrack0,\pi]$ and for $\left\vert \operatorname{Im}%
\lambda\right\vert <a,$ where $a$ is a fixed positive number.

It is easy to verify that
\[
E^{\prime}(x,\lambda)=A(x,\lambda)E(x,\lambda),
\]
where $A(x,\lambda)=\left(
\begin{array}
[c]{cc}%
i\lambda & iq_{2}(x)\\
-iq_{3}(x) & -i\lambda
\end{array}
\right)  $. Since the trace of $A(x,\lambda)$ is zero, Liouville's formula
yields
\begin{equation}
\det E(\pi,\lambda)=\det E(0,\lambda)=1. \tag{8}%
\end{equation}
A number $\lambda$ is an eigenvalue of the operator $L_{t}\left(  Q\right)  $
if and only if it is a root of the characteristic equation
\[
\Delta(\lambda,t)=\left\vert
\begin{array}
[c]{cc}%
e_{11}(\pi,\lambda)-e^{i\pi t} & e_{21}(\pi,\lambda)\\
e_{12}(\pi,\lambda) & e_{22}(\pi,\lambda)-e^{i\pi t}%
\end{array}
\right\vert =0.
\]
Using (8), we obtain
\[
\Delta(\lambda,t)=e^{i2\pi t}-e^{i\pi t}(e_{11}(\pi,\lambda)+e_{22}%
(\pi,\lambda))+1=0
\]
and hence
\begin{equation}
F(\lambda)=2\cos\pi t, \tag{9}%
\end{equation}
where $F(\lambda)=e_{11}(\pi,\lambda)+e_{22}(\pi,\lambda)$ is an entire function.

In Section 3, we consider the potentials $Q$ for which the real part
$\operatorname{Re}\left(  \sigma(L(Q))\right)  $ of $\sigma(L(Q))$\ has at
most finitely many gaps. Note that if the entries of $Q$\ are real-valued
functions, such potentials are called finite-zone potentials. Thus, in this
paper, we consider finite-zone PT-symmetric periodic potentials for the Dirac operator.

We do not discuss real finite-zone potentials, since the results and methods
developed for the self-adjoint case are not used here. However, to illustrate
the diversity of finite-zone PT-symmetric potentials and real finite-zone
potentials, let us describe in terms of the $2\pi$-periodic eigenvalues.

It is well known and clear that the spectrum of a self-adjoint Dirac operator
has finitely many gaps if and only if all but finitely many of its $2\pi
$-periodic eigenvalues are double eigenvalues. In this paper, we prove that
the spectrum of a PT-symmetric non-self-adjoint Dirac operator has finitely
many gaps if and only if all but finitely many of its $2\pi$-periodic
eigenvalues are either double eigenvalues (Case 1) or nonreal eigenvalues
(Case 2).

Thus, for a PT-symmetric non-self-adjoint Dirac operator, there is an
additional possibility (Case 2),which occurs for a large class of
complex-valued potentials. This provides a way to construct a large class of
PT-symmetric finite-zone potentials.

\section{On the Localizations of the Eigenvalues}

In this section, we consider asymptotic estimates for the eigenvalues of
$L_{t}\left(  Q\right)  $. It follows from (9) that $\sigma(L_{-t}%
(Q))=\sigma(L_{t}(Q))$ and hence
\begin{equation}
\sigma(L(Q))=%
%TCIMACRO{\tbigcup \limits_{t\in\lbrack0,1]}}%
%BeginExpansion
{\textstyle\bigcup\limits_{t\in\lbrack0,1]}}
%EndExpansion
\sigma(L_{t}(Q)).\tag{10}%
\end{equation}
Therefore, it is sufficient to consider the case $t\in\lbrack0,1].$ In [15],
we proved the following result.

\begin{summary}
$(a)$There exists a positive constant $N$ such that the operator $L_{t}\left(
Q\right)  $ has a unique eigenvalue, counting multiplicities, inside each of
the circles $C(2n\pm t,\varepsilon_{n})$, for $t\in T(n)$ and $\left\vert
n\right\vert \geq N,$ where $T(n):=[\varepsilon_{n},1-\varepsilon_{n}]$,
$\varepsilon_{n}\rightarrow0$ as $n\rightarrow\infty$ and $C(a,r)=\left\{
\lambda\in\mathbb{C}:\left\vert \lambda-a\right\vert =r\right\}  .$ In this
paper, we denote by $\lambda_{n}^{-}(t)$ and $\lambda_{n}^{+}(t),$
respectively, the eigenvalues \ lying inside $C(2n-t,\varepsilon_{n})$ and
$C(2n+t,\varepsilon_{n})$. These eigenvalues are the real and simple
eigenvalues of $L_{t}\left(  Q\right)  .$ Therefore, the intervals
\[
\Gamma_{n}^{-}(\varepsilon_{n})=\left\{  \lambda_{n}^{-}(t):t\in
\lbrack\varepsilon_{n},1-\varepsilon_{n}]\right\}  \text{ }\And\text{ }%
\Gamma_{n}^{+}(\varepsilon_{n})=\left\{  \lambda_{n}^{+}(t):t\in
\lbrack\varepsilon_{n},1-\varepsilon_{n}]\right\}
\]
belong to the spectrum of $L(Q).$ Moreover,
\begin{equation}
\lambda\in\sigma(L_{t}(Q))\Longrightarrow\overline{\lambda}\in\sigma
(L_{t}(Q)). \tag{11}%
\end{equation}

$(b)$ For $\left\vert n\right\vert \geq N,$ the intervals $[2n+2\varepsilon
_{n},2n+1-2\varepsilon_{n}]$ and $[2n+1+2\varepsilon_{n},2n+2-2\varepsilon
_{n}]$ belong to the spectrum of $L\left(  Q\right)  .$
\end{summary}

The following statement follows from Summary 1$(b).$

\begin{proposition}
The spectrum of $L(Q)$ has finitely many gaps if there exists $N_{0}\geq N,$
such that the intervals
\begin{equation}
\text{ }(2n-2\varepsilon_{n-1},2n+2\varepsilon_{n})\text{ }\And\text{
}(2n+1-2\varepsilon_{n},2n+1+2\varepsilon_{n})\text{ } \tag{12}%
\end{equation}
for $\left\vert n\right\vert >N_{0}$ belong to $\sigma(L\left(  Q\right)  ).$
\end{proposition}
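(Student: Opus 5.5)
The plan is to combine the intervals from Summary 1$(b)$ with the intervals in (12) and show that, for $|n|>N_{0}$, they cover a whole half-line of the real axis. Then all gaps of $\sigma(L(Q))\cap\mathbb{R}$ lie in a bounded set, and we can count them there.

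First fix $|n|>N_{0}$ (with $N_{0}\geq N$, so Summary 1$(b)$ applies). By Summary 1$(b)$ the closed intervals
\[
[2n+2\varepsilon_{n},2n+1-2\varepsilon_{n}] \ \text{ and } \ [2n+1+2\varepsilon_{n},2n+2-2\varepsilon_{n}]
\]
lie in $\sigma(L(Q))$. By hypothesis the open intervals in (12) also lie in $\sigma(L(Q))$. The first interval in (12) glues $[2(n-1)+1+2\varepsilon_{n-1},2n-2\varepsilon_{n-1}]$ (the second interval of $(b)$ for index $n-1$) to $[2n+2\varepsilon_{n},\dots]$. The second interval in (12) glues $[\dots,2n+1-2\varepsilon_{n}]$ to $[2n+1+2\varepsilon_{n},\dots]$. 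Chaining over consecutive $n$ gives
\[
[2n,2n+2]\subset\sigma(L(Q)) \quad\text{for all } |n|>N_{0}+1.
\]
Hence $(-\infty,-2N_{0}-2]\cup[2N_{0}+2,\infty)\subset\sigma(L(Q))$, and every gap lies in the bounded interval $[-2N_{0}-2,2N_{0}+2]$.

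The step needing care is the count of gaps inside this bounded interval, which requires some regularity of $\sigma(L(Q))$. By (9), $\sigma(L(Q))\cap\mathbb{R}$ is the set of real $\lambda$ with $F(\lambda)\in[-2,2]$. Since $F$ is entire, the restriction $F|_{\mathbb{R}}$ is real-analytic. Suppose the real part of the spectrum were not all of $\mathbb{R}$ on that compact interval; otherwise there are no gaps at all. In that case $F|_{\mathbb{R}}$ is not identically equal to a value in $[-2,2]$. The real solutions of $F(\lambda)=\pm2$, and of $\operatorname{Im}F=0$ where that is needed, are then finite in any compact set, because zeros of a nonzero real-analytic function are isolated. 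Therefore the complement of the spectrum in $[-2N_{0}-2,2N_{0}+2]$ is a finite union of open intervals.

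If one worries that $\operatorname{Im}F$ vanishes identically, that case only simplifies matters, since $\{F\in[-2,2]\}$ is then bounded by the finitely many roots of $F=\pm2$. Alternatively, the paper may intend ``gap'' simply in the sense that no gaps remain outside a bounded set, and the chaining argument above already proves that.
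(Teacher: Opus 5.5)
Your proposal is correct and follows the paper's route. The paper simply says Proposition 1 ``follows from Summary 1$(b)$'', meaning that the intervals of Summary 1$(b)$, glued together by the intervals (12), cover two real half-lines, which is exactly your chaining step. Your further argument that only finitely many gaps fit in the remaining bounded interval, using $\sigma(L(Q))\cap\mathbb{R}=\{\lambda\in\mathbb{R}:F(\lambda)\in[-2,2]\}$ and the finiteness on compact sets of the real zeros of $F\mp2$ (or of $\operatorname{Im}F$), is valid and fills in a point the paper leaves implicit.
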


Therefore, in order to consider the finite-zone Dirac operators, we consider
those potentials $Q$ for which there exists $N_{0}\geq N,$ such that the
intervals (12) for $\left\vert n\right\vert >N_{0}$ belong to $\sigma(L\left(
Q\right)  ).$ We consider the case $n>N_{0}.$ The case $n<-N_{0}$ is similar.
It readily follows from Summary 1 that
\begin{equation}
\lambda_{n}^{-}(t)<\lambda_{n}^{+}(t)\And\lambda_{n}^{+}(t)<\lambda_{n+1}%
^{-}(t) \tag{13}%
\end{equation}
for all $t\in\lbrack\varepsilon_{n},1-\varepsilon_{n}]$ and $n\geq N.$

Since $\lambda_{n}^{-}(t)$ and $\lambda_{n}^{+}(t)$ are simple eigenvalues we
have the following statement.

\begin{theorem}
If $n>N,$ where $N$ is defined in Summary 1, then
\begin{equation}
\Gamma_{n}^{+}(\varepsilon_{n})=[\lambda_{n}^{+}(\varepsilon_{n}),\lambda
_{n}^{+}(1-\varepsilon_{n})]\text{ }\And\Gamma_{n}^{-}(\varepsilon
_{n})=[\lambda_{n}^{-}(1-\varepsilon_{n}),\lambda_{n}^{-}(\varepsilon
_{n})].\text{ } \tag{14}%
\end{equation}

\end{theorem}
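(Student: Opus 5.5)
The plan is to show that $\lambda_n^{\pm}(t)$ are continuous, strictly monotone real functions of $t$ on $[\varepsilon_n,1-\varepsilon_n]$. Then the images $\Gamma_n^{\pm}(\varepsilon_n)$ are closed intervals whose endpoints are the values at $t=\varepsilon_n$ and $t=1-\varepsilon_n$. The direction of monotonicity is then read off from the localization in Summary 1$(a)$.

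\textbf{Step 1: continuity and smoothness in $t$.} By Summary 1$(a)$, for $t\in T(n)$ the number $\lambda_n^{+}(t)$ is the unique root of $F(\lambda)=2\cos\pi t$ inside $C(2n+t,\varepsilon_n)$. This root is simple as an eigenvalue of $L_t(Q)$, so it is also a simple root of $F(\lambda)-2\cos \pi t$. Indeed, $t\in(0,1)$ gives $2\cos\pi t\neq\pm2$, so an algebraically simple Bloch eigenvalue corresponds to a simple zero of $F-2\cos\pi t$; this is the standard fact I would invoke. Since $F$ is entire, Rouch\'e's theorem and the implicit function theorem give that $\lambda_n^{+}(t)$ depends analytically, in particular continuously, on $t$. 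By Summary 1 it is real. So $F'(\lambda_n^{+}(t))\neq0$ and
\[
\frac{d\lambda_n^{+}}{dt}(t)=\frac{-2\pi\sin\pi t}{F'(\lambda_n^{+}(t))}.
\]
The same holds for $\lambda_n^{-}$.

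\textbf{Step 2: strict monotonicity.} Since $\sin\pi t\neq0$ on $T(n)$ and $F'(\lambda_n^{\pm}(t))\neq0$, the derivative never vanishes. Hence each $\lambda_n^{\pm}$ is strictly monotone on the interval $T(n)$. A simpler alternative avoids derivatives: $\cos\pi t$ is injective on $[0,1]$, so $\lambda_n^{+}(t_1)=\lambda_n^{+}(t_2)$ forces $t_1=t_2$. A continuous injective real function on an interval is strictly monotone.

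\textbf{Step 3: direction of monotonicity.} I expect this step to be the main obstacle, since one must make sure the $\varepsilon_n$-discs are small relative to the motion of their centers. We have $|\lambda_n^{+}(t)-(2n+t)|<\varepsilon_n$. Therefore
\[
\lambda_n^{+}(1-\varepsilon_n)-\lambda_n^{+}(\varepsilon_n)>(1-2\varepsilon_n)-2\varepsilon_n=1-4\varepsilon_n>0
\]
for $n$ large, which we can assume by enlarging $N$ since $\varepsilon_n\to0$. So $\lambda_n^{+}$ is increasing and $\Gamma_n^{+}(\varepsilon_n)=[\lambda_n^{+}(\varepsilon_n),\lambda_n^{+}(1-\varepsilon_n)]$. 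Likewise, $|\lambda_n^{-}(t)-(2n-t)|<\varepsilon_n$ shows that $\lambda_n^{-}$ is decreasing. Hence $\Gamma_n^{-}(\varepsilon_n)=[\lambda_n^{-}(1-\varepsilon_n),\lambda_n^{-}(\varepsilon_n)]$, which is (14).

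If one only wants to rely on $N$ as given, the needed smallness $\varepsilon_n<1/4$ is implicit in Summary 1: otherwise the circles $C(2n\pm t,\varepsilon_n)$ would overlap and the ordering (13) would fail.
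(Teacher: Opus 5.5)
Your proposal is correct and matches the paper's proof: the paper gives both your derivative argument, $d\lambda_n^{+}/dt=-2\pi\sin\pi t/F'(\lambda_n^{+}(t))\neq0$, and your injectivity argument via $\cos\pi t_1=\cos\pi t_2$, and it fixes the direction by (16). The paper only asserts that (16) follows from Summary 1, whereas you derive it explicitly from the localization as $1-4\varepsilon_n>0$. One small caveat: your closing claim that $\varepsilon_n<1/4$ is forced because the circles would otherwise overlap and (13) would fail is not right, since the discs centered at $2n\pm t$ are $2t\geq2\varepsilon_n$ apart for any $t\in T(n)$; enlarging $N$ as you do in the main argument suffices (equivalently, note that Summary 1$(b)$ is only meaningful when $\varepsilon_n\le1/4$, and that already suffices).
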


\begin{proof}
\textbf{First Proof. }Let us prove the first equality in (14). Since
$\lambda_{n}^{+}(t)$ is a simple eigenvalue of $L_{t}(Q),$ we have
\begin{equation}
F^{^{\prime}}(\lambda_{n}^{+}(t))\neq0, \tag{15}%
\end{equation}
where $F^{^{\prime}}(\lambda)$ is the derivative of $F^{^{\prime}}$ with
respect to $\lambda.$ For $t\in\lbrack\varepsilon_{n},1-\varepsilon_{n}],$
Summary 1 implies that $\lambda_{n}^{+}(t)\in\mathbb{R}.$ Thus, $\lambda
_{n}^{+}(t)$ can be regarded as real-valued differentiable functions of the
real variable $t\in\lbrack\varepsilon_{n},1-\varepsilon_{n}]$ . Since
$F(\lambda_{n}^{+}(t))=2\cos\pi t,$ differentiation with respect to $t$ gives
\[
\frac{d\lambda_{n}^{+}(t)}{dt}=\frac{-2\pi\sin\pi t}{F^{^{\prime}}(\lambda
_{n}^{+}(t))}\neq0,
\]
for $t\in\lbrack\varepsilon_{n},1-\varepsilon_{n}].$ Moreover, it follows from
Summary 1 that
\begin{equation}
\lambda_{n}^{+}(\varepsilon_{n})<\lambda_{n}^{+}(1-\varepsilon_{n}). \tag{16}%
\end{equation}
Therefore, $\lambda_{n}^{+}(t)$ is an increasing continuous function on
$[\varepsilon_{n},1-\varepsilon_{n}].$ Thus the first equality of (14) holds.
In the same way, we prove the second equality of (14).

\textbf{Second proof. }The function $\lambda_{n}^{+}(t)$ is continuous on
$[\varepsilon_{n},1-\varepsilon_{n}].$ Moreover, it is one-to-one on the
interval $[\varepsilon_{n},1-\varepsilon_{n}]$. Indeed suppose that $t_{1}\neq
t_{2}$ and $\lambda_{n}^{+}(t_{1})=\lambda_{n}^{+}(t_{2}).$ Then using (9), we
obtain $\cos\pi t_{1}=\cos\pi t_{2}$ which is a contradiction for $t_{1}%
,t_{2}\in\lbrack\varepsilon_{n},1-\varepsilon_{n}]$ with $t_{1}\neq t_{2}$.
Therefore, $\lambda_{n}^{+}(t)$ is strictly monotone on $[\varepsilon
_{n},1-\varepsilon_{n}]$. On the other hand, (16) holds. Thus, $\lambda
_{n}^{+}(t)$ is an increasing function on $[\varepsilon_{n},1-\varepsilon
_{n}]$ and $\lambda_{n}^{+}([\varepsilon_{n},1-\varepsilon_{n}])=$
$[\lambda_{n}^{+}(\varepsilon_{n}),\lambda_{n}^{+}(1-\varepsilon_{n})].$ In
the same way, we prove the second equality of (14).
\end{proof}

Now, we consider the eigenvalues of $L_{t}\left(  Q\right)  $ for $t\in
\lbrack0,\varepsilon_{n})$ and $t\in(1-\varepsilon_{n},1],$ by using (9). It
follows from the asymptotic formulas (7) that
\begin{equation}
F(\lambda)=2\cos\lambda\pi+\rho(\lambda) \tag{17}%
\end{equation}
and $\rho(\lambda)\rightarrow0$ as $\lambda\rightarrow\infty$ with $\left\vert
\operatorname{Im}\lambda\right\vert <a.$ Let $h$ be fixed positive number such
that $h<\min\left\{  \frac{1}{10\pi},\frac{1}{3}a\right\}  $. There exists
$N(h)>N$ such that
\begin{equation}
\varepsilon_{n}<h,\text{ }\left\vert \rho(\lambda)\right\vert <h^{2}, \tag{18}%
\end{equation}
for $\lambda\in D(h):=\left\{  \lambda\in\mathbb{C}:\operatorname{Re}%
\lambda\in\lbrack2n-1,2n+2],\text{ }\left\vert \operatorname{Im}%
\lambda\right\vert \leq a\right\}  $ and $n\geq N(h)$, where $N$ is defined in
Summary 1. Now, using (9), (17), (18) and Rouch\'{e}'s theorem, we prove the
following result.

\begin{theorem}
$(a)$ If $t\in\lbrack0,h],$ then the operator $L_{t}\left(  Q\right)  $ has
two eigenvalues, counting multiplicities, inside the circle $C(2n,3h)$ for
$n>N(h).$

$(b)$ If $t\in\lbrack1-h,1],$ then the operator $L_{t}\left(  Q\right)  $ has
two eigenvalues, counting multiplicities, inside the circle $C(2n+1,3h)$ for
$n>N(h).$
\end{theorem}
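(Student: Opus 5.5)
We compare the entire functions
\[
f(\lambda)=F(\lambda)-2\cos\pi t,\qquad g(\lambda)=2\cos\lambda\pi-2\cos\pi t,
\]
so that $f-g=\rho$ by (17). By (9), the eigenvalues of $L_{t}(Q)$ inside $C(2n,3h)$ are the zeros of $f$ there, counted with multiplicity. We will show that $g$ has exactly two zeros inside $C(2n,3h)$ and that $|g|>|\rho|$ on this circle; Rouch\'{e}'s theorem then gives part $(a)$.

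\textbf{Step 1: zeros of $g$.} Write
\[
g(\lambda)=-4\sin\frac{\pi(\lambda+t)}{2}\,\sin\frac{\pi(\lambda-t)}{2}.
\]
Its zeros are $\lambda=\pm t+2k$, $k\in\mathbb{Z}$. For $t\in[0,h]$ exactly two of them, $2n\pm t$, lie inside $C(2n,3h)$ (a double zero $2n$ when $t=0$). All other zeros are at distance at least $2-h>3h$ from $2n$.

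\textbf{Step 2: lower bound on the circle.} Take $\lambda\in C(2n,3h)$ and set $\mu=\lambda-2n$, so $|\mu|=3h$. Then
\[
|g(\lambda)|=4\,\Bigl|\sin\frac{\pi(\mu+t)}{2}\Bigr|\,\Bigl|\sin\frac{\pi(\mu-t)}{2}\Bigr|.
\]
Since $|\mu\pm t|\geq 3h-h=2h$ and $|\mu\pm t|\leq 4h<1$ (recall $h<1/(10\pi)$), we can use $|\sin z|\geq c|z|$ for $|z|\leq\pi/2\cdot 4h$, with $c$ close to $1$ (say $c\geq 2/\pi$, or sharper). This gives
\[
|g(\lambda)|\geq 4\Bigl(c\,\frac{\pi}{2}\,2h\Bigr)^{2}=4c^{2}\pi^{2}h^{2}>h^{2}.
\]
The circle lies in $D(h)$: its real parts are in $[2n-1,2n+2]$ and $|\operatorname{Im}\lambda|\leq 3h<a$. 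Hence (18) gives $|\rho(\lambda)|<h^{2}<|g(\lambda)|$.

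\textbf{Step 3: conclusion, and part $(b)$.} Rouch\'{e}'s theorem now yields part $(a)$. For part $(b)$ we use the same argument centered at $2n+1$ with $t=1-s$, $s\in[0,h]$. Since $\cos\pi t=-\cos\pi s$, the zeros of $g$ become $2k+1\pm s$, and exactly two of them, $2n+1\pm s$, lie inside $C(2n+1,3h)$. The bound on the circle is identical after the substitution $\mu=\lambda-2n-1$, which turns the factors into cosines shifted to sines.

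The main obstacle is the lower bound for complex $\mu$ in Step 2. The inequality $|\sin z|\geq c|z|$ must hold for complex $z$ with small modulus; it does, since $|\sin z/z-1|\leq \sinh|z|/|z|-1$, which is small for $|z|\leq 2\pi h<1/5$. The constants then comfortably beat $h^{2}$.
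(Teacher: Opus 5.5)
Your proposal is correct and follows the paper's proof essentially step for step: Rouch\'{e}'s theorem on $C(2n,3h)$ comparing $F(\lambda)-2\cos\pi t$ with $2\cos\pi\lambda-2\cos\pi t$, the product-of-sines factorization, the bounds $2h\le|\mu\pm t|\le 4h$, and a linear lower bound for $|\sin z|$ near $0$ played against $|\rho|<h^{2}$ from (18). The differences are minor: you use a sharper constant than the paper's $|\sin z|>\frac{1}{2}|z|$, and you explicitly check that the circle lies in $D(h)$ and that exactly two zeros of the comparison function lie inside, details the paper leaves implicit.
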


\begin{proof}
$(a)$ To apply Rouche's theorem, we first prove the inequality
\begin{equation}
\left\vert \rho(\lambda)\right\vert <2\left\vert \cos\pi\lambda-\cos
\pi(2n+t)\right\vert \tag{19}%
\end{equation}
for $\lambda=2n+3he^{i\alpha}$, $\alpha\in\lbrack0,2\pi),$ and $t\in
\lbrack0,h].$ Using the trigonometric identity
\[
\cos x-\cos y=-2\sin\frac{x+y}{2}\sin\frac{x-y}{2},
\]
we obtain
\begin{equation}
2\left\vert \cos\pi\lambda-\cos\pi(2n+t)\right\vert =4\left\vert \sin\frac
{\pi}{2}(3he^{i\alpha}+t)\right\vert \left\vert \sin\frac{\pi}{2}%
(3he^{i\alpha}-t)\right\vert . \tag{20}%
\end{equation}
We now estimate the right-hand side of (20). Since $t\in\lbrack0,h],$ and
$h<\frac{1}{10\pi},$ the reverse triangle inequality and the triangle
inequality give
\[
0<\pi h=2h\frac{\pi}{2}\leq\frac{\pi}{2}\left\vert 3he^{i\alpha}\pm
t\right\vert \leq4h\frac{\pi}{2}=2\pi h<\frac{1}{5}.
\]
Therefore, using the elementary inequality $\left\vert \sin z\right\vert
>\frac{1}{2}\left\vert z\right\vert $ for $0<\left\vert z\right\vert <1/5,$ we
see that%
\[
\left\vert \sin\frac{\pi}{2}(3he^{i\alpha}\pm t)\right\vert >\frac{\pi}{2}h.
\]
Substituting these estimates into (20), we obtain%
\[
2\left\vert \cos\pi\lambda-\cos\pi(2n+t)\right\vert >\pi^{2}h^{2}.
\]
Hence, (19) follows from (18) and the last inequality. Consequently, by the
Rouche's theorem and (17), equations (9) and
\begin{equation}
\cos\pi\lambda-\cos\pi t=0 \tag{21}%
\end{equation}
have the same number of roots, counting multiplicity, inside $C(2n,3h),$ where
$n>N(h).$ Since equation (21) has $2$ roots inside $C(2n,3h),$ equation (9)
also has$\ 2$ roots inside this circle, counting multiplicity. This proves
part $(a)$. In the same way, we prove part $(b).$
\end{proof}

\section{Main Results}

Now, using Theorems 1 and 2, we prove the main result of this paper for the
operator $L\left(  Q\right)  $. \ In this section, we determine conditions on
the periodic and antiperiodic eigenvalues under which the intervals (12) (see
Proposition 1) are covered by the eigenvalues of $L_{t}\left(  Q\right)  $ for
$t\in\lbrack0,1]$. For the convenience in the proofs of the next theorems, we
introduce the following notations.

\begin{notation}
Recall that, in Summary 1, for $t\in\lbrack\varepsilon_{n},1-\varepsilon
_{n}],$ the eigenvalues of $L_{t}(Q)$\ lying inside the circles
$C(2n-t,\varepsilon_{n})$ and $C(2n+t,\varepsilon_{n})$ are denoted by
$\lambda_{n}^{-}(t)$ and $\lambda_{n}^{+}(t),$ respectively, where $n\geq N.$
Moreover, (13) holds. We now introduce notation for the eigenvalues of
$L_{t}(Q),$ considered in Theorem 2 for $t\in\lbrack0,h),$ in accordance with
the notations in Summary 1. Since $\varepsilon_{n}<h$ (see (18)), the circles
$C(2n+t,\varepsilon_{n})$ and $C(2n-t,\varepsilon_{n})$ for $t\in\lbrack0,h)$
lies inside $C(2n,3h).$ This means that $\lambda_{n}^{-}(t)$ and $\lambda
_{n}^{+}(t),$ for $t\in\lbrack\varepsilon_{n},h],$ are the eigenvalues of
$L_{t}(Q)$ lying in $C(2n,3h).$ They are real and simple and are precisely the
eigenvalues considered in Theorem 2$(a)$. However, for $t\in\lbrack
0,\varepsilon_{n}),$ the operator $L_{t}(Q)$ may have double eigenvalues.
Since the entire function $F^{^{\prime}}(\lambda)$ has only finitely many
zeros inside the circle $C(2n,3h)$, there exist at most finitely many
$t_{1},t_{2},...,t_{k}\in\lbrack0,h)$ for which the eigenvalues of $L_{t_{j}%
}(Q),$ $j=1,2,...,k,$ lying inside $C(2n,3h)$ are double eigenvalues. Let
\begin{equation}
t_{0}:=0\leq t_{1}<t_{2}<...<t_{k}<h=:t_{k+1}. \tag{22}%
\end{equation}
If $t\in\lbrack0,h)\backslash\left\{  t_{1},t_{2},...,t_{k}\right\}  ,$ then
the circle $C(2n,3h)$ contains exactly two simple eigenvalues of $L_{t}(Q)$.
We denote these eigenvalues again by $\lambda_{n}^{-}(t)$ and $\lambda_{n}%
^{+}(t)$ as follows. If they are real, then $\lambda_{n}^{-}(t)<\lambda
_{n}^{+}(t).$ If they are not real, then they are complex conjugates (see
(11)). In this case, we denote them so that $\operatorname{Im}\lambda_{n}%
^{-}(t)<0$ and $\operatorname{Im}\lambda_{n}^{+}(t)>0.$ If $t=t_{j}$ for some
$j,$ then $\lambda_{n}^{-}(t_{j})=\lambda_{n}^{+}(t_{j}).$

Since $[h,1-h]\subset\lbrack\varepsilon_{n},1-\varepsilon_{n}]$ for $n>N(h)$,
where $h$ and $N(h)$ are defined in the previous section, the notations of
Summary 1 and (13) can be used for $t\in\lbrack h,1-h].$
\end{notation}

Now, we consider the functions $\lambda_{n}^{-}(t)$ and $\lambda_{n}^{+}(t)$
defined in Notation 1 on the interval $[0,h].$

\begin{theorem}
If $n>N(h),$ then the eigenvalues $\lambda_{n}^{-}(t)$ and $\lambda_{n}%
^{+}(t)$ of $L_{t}(Q)$ can be chosen as continuous function of $t$ on $[0,h].$
\end{theorem}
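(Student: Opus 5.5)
Our approach is to treat the pair $\{\lambda_{n}^{-}(t),\lambda_{n}^{+}(t)\}$ as the two roots inside $C(2n,3h)$ of the equation $F(\lambda)=2\cos\pi t$, and to obtain continuity from the continuous dependence of roots on parameters. Fix $n>N(h)$. By Theorem 2$(a)$, for every $t\in[0,h]$ the circle $C(2n,3h)$ contains exactly two roots of (9), counting multiplicity. Also, no root lies on $C(2n,3h)$: inequality (19) shows that $F(\lambda)-2\cos\pi t\neq0$ there. Hence
\[
s_{1}(t)=\lambda_{n}^{-}(t)+\lambda_{n}^{+}(t),\qquad s_{2}(t)=\left(\lambda_{n}^{-}(t)\right)^{2}+\left(\lambda_{n}^{+}(t)\right)^{2}
\]
can be written via the argument principle as
\[
s_{k}(t)=\frac{1}{2\pi i}\oint_{C(2n,3h)}\frac{\lambda^{k}F^{\prime}(\lambda)}{F(\lambda)-2\cos\pi t}\,d\lambda .
\]
The integrand is jointly continuous in $(\lambda,t)$ on the compact set $C(2n,3h)\times[0,h]$, and its denominator does not vanish there. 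Therefore $s_{1}$ and $s_{2}$ are continuous on $[0,h]$. The two eigenvalues are the roots of $\mu^{2}-s_{1}(t)\mu+\frac{1}{2}(s_{1}^{2}(t)-s_{2}(t))=0$, so the unordered pair depends continuously on $t$ (in the Hausdorff sense).

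It remains to choose a continuous labelling, and we split this step using the points in (22). On each open interval $(t_{j},t_{j+1})$ the two eigenvalues are simple, so $F^{\prime}\neq0$ at them. The implicit function theorem applied to $F(\lambda)=2\cos\pi t$ then gives locally analytic, hence continuous, branches. We follow these branches across the whole interval. Only finitely many exceptional points $t_{1},\dots,t_{k}$ exist (as explained in Notation 1), so $[0,h]$ splits into finitely many pieces. At an exceptional point $t_{j}$ the two roots coincide, and both branches coming from the left and from the right tend to the same value $\lambda_{n}^{\pm}(t_{j})$ by the continuity of the unordered pair. We can therefore glue the branches at $t_{j}$ in either way and obtain continuous functions on all of $[0,h]$. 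At $t=h$ these functions coincide with the functions of Summary 1, since for $t\in[\varepsilon_{n},h]$ the eigenvalues are real, simple and separated by (13).

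The main obstacle is compatibility with the specific labelling of Notation 1: ordering by size when the roots are real, and by the sign of the imaginary part when they are complex conjugate. On a subinterval where the roots stay real and simple they never cross, since crossing would produce a double root. They also cannot become nonreal in the interior of such an interval, because by (11) a nonreal root forces a conjugate pair, and passing from two real roots to a conjugate pair requires a collision. Likewise, a conjugate pair cannot cross the real axis without colliding. Consequently the labelling ``$\lambda_{n}^{-}<\lambda_{n}^{+}$'' or ``$\operatorname{Im}\lambda_{n}^{-}<0<\operatorname{Im}\lambda_{n}^{+}$'' is constant on each $(t_{j},t_{j+1})$, and the implicit-function branches agree with it there. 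Continuity at the $t_{j}$ then follows from the Hausdorff continuity of the pair together with the equality $\lambda_{n}^{-}(t_{j})=\lambda_{n}^{+}(t_{j})$. If one insists on the labelling of Notation 1, this gluing is forced, and it is continuous precisely because the two values merge at $t_{j}$.
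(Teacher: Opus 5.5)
Your proposal is correct and follows essentially the same route as the paper: analytic branches on the open intervals between the finitely many double-eigenvalue points $t_j$, agreement of these branches with the labelling of Notation 1, and continuity at each $t_j$ because the two roots merge there. The one variation is how you get continuity at the $t_j$: you apply the argument principle to $s_1,s_2$ to make the unordered pair continuous, whereas the paper argues that any limit point of $\lambda_n^{\pm}(t)$ as $t\to t_s$ solves $F(\lambda)=2\cos\pi t_s$ and hence, by Theorem 2$(a)$ together with (19) (which keeps it off the circle), must be the double root; your version is equivalent and spells that step out a bit more carefully.
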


\begin{proof}
First, consider the functions $\lambda_{n}^{-}(t)$ and $\lambda_{n}^{+}(t)$ in
the intervals
\begin{equation}
(t_{0},t_{1}),(t_{1},t_{2}),(t_{2},t_{3}),...,(t_{k-1},t_{k}),(t_{k},t_{k+1})
\tag{23}%
\end{equation}
If $t_{1}=0,$ then the first interval in (23) is absent. Suppose that
$t\in(t_{s},t_{s+1})$ for some $s=0,1,...,k.$ Then $\lambda_{n}^{-}(t)$ and
$\lambda_{n}^{+}(t)$ are simple eigenvalues, and either

$(a)$ $\lambda_{n}^{-}(t)$ and $\lambda_{n}^{+}(t)$ are real numbers
satisfying the inequality $\lambda_{n}^{-}(t)<\lambda_{n}^{+}(t),$

or

$(b)$ $\lambda_{n}^{-}(t)$ and $\lambda_{n}^{+}(t)$ are complex conjugates,
with $\operatorname{Im}\lambda_{n}^{-}(t)<0$ and $\operatorname{Im}\lambda
_{n}^{+}(t)>0.$

Since $\lambda_{n}^{-}(t)$ and $\lambda_{n}^{+}(t)$ are simple eigenvalues,
there exist $\varepsilon>0$ and two functions $\lambda_{1}(t)$ and
$\lambda_{2}(t)$ such that:

$(i)$ $(t-\varepsilon,t+\varepsilon)\subset(t_{s},t_{s+1})$ and the functions
$\lambda_{1}$ and $\lambda_{2}$ are holomorphic on $(t-\varepsilon
,t+\varepsilon);$

$(ii)$ $\lambda_{1}(v)$ and $\lambda_{2}(v)$ are eigenvalues of $L_{v}(Q)$ for
$v\in$ $(t-\varepsilon,t+\varepsilon),$ and $\lambda_{1}(t)=\lambda_{n}%
^{-}(t),$ $\lambda_{2}(t)=\lambda_{n}^{+}(t).$

Moreover, in case $(a),$ $\lambda_{1}(v)<\lambda_{2}(v)$ and in case $(b),$
$\operatorname{Im}\lambda_{2}(v)>0$, for $v\in(t-\varepsilon,t+\varepsilon).$
Therefore, by Notation 1, $\lambda_{n}^{-}(v)=\lambda_{1}(v)$ and $\lambda
_{n}^{+}(t)=\lambda_{2}(v)$ for $v\in(t-\varepsilon,t+\varepsilon).$ Thus,
$\lambda_{n}^{-}(t)$ and $\lambda_{n}^{+}(t)$ are holomorphic functions on
each of the intervals in (23).

Let $\lambda^{-}$ and $\lambda^{+}$ be arbitrary limit point of $\lambda
_{n}^{-}(t)$ and $\lambda_{n}^{+}(t),$ respectively, as $t\rightarrow t_{s}.$
Since $F$ is continuous functions, it follows from (9) that
\[
F(\lambda^{\pm})=2\cos t_{s}.
\]
However, by Theorem 2$(a),$ this equation has only one double root
$\lambda_{n}^{-}(t_{s})=\lambda_{n}^{+}(t_{s})$ inside $C(2n,3h).$ Thus
\[
\lim_{t\rightarrow t_{s}^{-}}\lambda_{n}^{-}(t)=\lim_{t\rightarrow t_{s}^{+}%
}\lambda_{n}^{-}(t)=\lim_{t\rightarrow t_{s}^{-}}\lambda_{n}^{+}%
(t)=\lim_{t\rightarrow t_{s}^{+}}\lambda_{n}^{+}(t)=\lambda_{n}^{-}%
(t_{s})=\lambda_{n}^{+}(t_{s})
\]
for $s=1,2,...,k.$ In the same way, we prove continuity at $0$ from the right.
The theorem is proved.
\end{proof}

\begin{theorem}
If $n>N(h),$ then the following statements hold:

$(a)$ For each $t\in\lbrack0,h],$ any double eigenvalue $\lambda_{n}^{+}(t)$
is real.

$(b)$ If $\lambda_{n}^{+}(t_{1})$ and $\lambda_{n}^{+}(t_{2})$ are real
numbers, where $0\leq t_{1}<t_{2}\leq h,$ then $\lambda_{n}^{+}(t)$ is real
for all $t\in\lbrack t_{1},t_{2}]$.

$(c)$ There exists at most one $t\in\lbrack0,h),$ such that $\lambda_{n}%
^{+}(t)$ is a double eigenvalue.

$(d)$ Statements $(a)$-$(c)$ remain valid if $\lambda_{n}^{+}(t)$ is replaced
by $\lambda_{n}^{-}(t).$
\end{theorem}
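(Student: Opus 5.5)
The plan rests on two earlier facts. The first is the conjugation symmetry (11). The second is the monotonicity argument from the second proof of Theorem 1: the equation $F(\lambda)=2\cos\pi t$ fixes $t\in[0,1]$ uniquely once $\lambda$ is known. I will also use that Theorem 2$(a)$ gives exactly two eigenvalues, with multiplicity, inside $C(2n,3h)$.

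For $(a)$, I argue by contradiction. Suppose $\lambda_n^+(t)$ is a double eigenvalue and is not real. By (11), $\overline{\lambda_n^+(t)}$ is also an eigenvalue of $L_t(Q)$. It lies inside $C(2n,3h)$, since that disc is symmetric about the real axis, and it differs from $\lambda_n^+(t)$. So the disc contains at least three eigenvalues counting multiplicity: the double one and its conjugate. This contradicts Theorem 2$(a)$. The same argument works for $\lambda_n^-(t)$.

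For $(b)$, I use the continuity from Theorem 3. Assume $\lambda_n^+(t_1)$ and $\lambda_n^+(t_2)$ are real but $\lambda_n^+(\tau)$ is nonreal for some $\tau\in(t_1,t_2)$. Take the maximal open subinterval $(\alpha,\beta)\ni\tau$ on which $\lambda_n^+$ is nonreal. By continuity, $\lambda_n^+(\alpha)$ and $\lambda_n^+(\beta)$ are real.

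At an endpoint, say $\alpha$, the nonreal conjugate pair $\lambda_n^\pm(t)$ (Notation 1) converges to a common real limit as $t\to\alpha^+$. Hence $\lambda_n^-(\alpha)=\lambda_n^+(\alpha)=:\mu_\alpha$ is a double real eigenvalue, so $F(\mu_\alpha)=2\cos\pi\alpha$ and $F'(\mu_\alpha)=0$. Likewise $\mu_\beta$ is a double real eigenvalue with $F(\mu_\beta)=2\cos\pi\beta$.

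To rule this out I want a reality argument. For real $\lambda$, (11) applied with the appropriate $t$ suggests $F$ is real on the real axis. Near a real point $\mu$ where $F'(\mu)=0$ and $F''(\mu)\ne0$, the real-valued function $F$ on $\mathbb{R}$ has a local extremum. The values $2\cos\pi t$ then have real preimages on one side of the critical value and complex-conjugate preimages on the other. Since $2\cos\pi t$ is strictly decreasing in $t$, this gives the following picture near $\alpha$: for $t$ on one side of $\alpha$ the roots are real, on the other side they are a nonreal pair.

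At $\alpha$ the pair enters the nonreal region as $t$ increases, which forces $F(\mu_\alpha)$ to be a local minimum, i.e. $F''(\mu_\alpha)>0$ in the orientation the paper uses. At $\beta$ the pair returns to the real axis, which forces $F(\mu_\beta)$ to be a local maximum. I then need a contradiction from both phenomena occurring inside $C(2n,3h)$. Here I would use (17)–(18): $F$ is $h^2$-close to $2\cos\pi\lambda$, whose only critical point in the disc is $2n$, a maximum. Hence $F'$ has exactly one zero inside $C(2n,3h)$, by Rouché applied to $F'$, with a Cauchy estimate giving $\rho'$ small on a slightly smaller circle. Therefore $\mu_\alpha=\mu_\beta$ is that single zero, and $2\cos\pi\alpha=2\cos\pi\beta$ gives $\alpha=\beta$, a contradiction.

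Part $(c)$ follows from the same observation. A double eigenvalue inside $C(2n,3h)$ must be a zero of $F'$, and $F'$ has only one zero there, call it $\mu$. So the double eigenvalue is $\mu$, and $t$ is determined uniquely by $2\cos\pi t=F(\mu)$ with $t\in[0,h)$. Part $(d)$ is symmetric, because $\lambda_n^-(t)$ and $\lambda_n^+(t)$ are either both real or a conjugate pair.

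The main obstacle is the uniqueness of the zero of $F'$ in $C(2n,3h)$. It requires a bound on $\rho'$, obtained from Cauchy's estimate on a strip using $|\rho|<h^2$ with $|\operatorname{Im}\lambda|\le a$ and $3h<a$. One may also need to shrink the circle slightly and enlarge $N(h)$. With that in hand, $(b)$ needs no orientation argument: the two distinct double points $\alpha<\beta$ are already excluded by $(c)$. Only the boundary case where $\alpha$ or $\beta$ equals $t_1$ or $t_2$ needs care, and there the same argument applies.
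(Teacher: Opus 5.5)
Your argument is correct. Part $(a)$ is exactly the paper's argument, but for $(b)$ and $(c)$ you reverse the paper's logical order.

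The paper proves $(b)$ first, by a soft global argument. If $\lambda_n^+$ left the real axis between two real values, then $\{\lambda_n^+(t)\}\cup\{\overline{\lambda_n^+(t)}\}$, $t\in[t_1,t_2]$, would be a closed curve in $\sigma(L(Q))=F^{-1}([-2,2])$. This is impossible for a nonconstant entire $F$: implicitly, $\operatorname{Im}F$ would be harmonic and vanish on the boundary of the enclosed region. The paper then derives $(c)$ from $(b)$ by an intermediate-value argument. Between two consecutive double points both branches are real with common endpoints, so some $c$ equals $\lambda_n^+(d_1)=\lambda_n^-(d_2)$. This forces either a further double point or $\cos\pi d_1=\cos\pi d_2$.

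You instead prove a quantitative lemma: $F'$ has a single zero $\mu$ inside $C(2n,3h)$. This gives $(c)$ at once, since every double eigenvalue there equals $\mu$ and $t$ is fixed by $2\cos\pi t=F(\mu)$. Then $(b)$ follows, because the endpoints of a maximal nonreal subinterval would be two distinct double points.

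What each route buys:
\begin{itemize}
\item The paper's route needs no derivative estimates and works with $N(h)$ exactly as fixed by (18).
\item Your route is more concrete and gives more information: the only possible double point near $2n$ is the critical point $\mu$ of $F$, and it occurs at the single $t$ with $2\cos\pi t=F(\mu)$.
\end{itemize}

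The price of your route is the estimate on $\rho'$. With only $h<a/3$, the Cauchy radius $a-3h$ available on $C(2n,3h)$ may be tiny, so you do need to enlarge $N(h)$. This is harmless, since $\rho\to0$ uniformly in the strip and $N(h)$ is only asserted to exist.

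You can also avoid the enlargement. By $(a)$, only real zeros of $F'$ in $(2n-3h,2n+3h)$ matter. At real points the Cauchy radius $\min\{a,1-3h\}$ is available, which gives $|\rho''|<2/9$ and hence $\operatorname{Re}F''<0$ on that interval. So $F'$ has at most one real zero there, with the original $N(h)$.

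The local minimum/maximum paragraph is not needed and can be dropped.
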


\begin{proof}
$(a)$ Suppose that $\lambda_{n}^{+}(t)$ is an double eigenvalue and
$\lambda_{n}^{+}(t)\notin\mathbb{R}$. Then, by (11), its complex conjugate
$\overline{\lambda_{n}^{+}(t)}$ is also an eigenvalue lying inside the circle
$C(2n,3h)$. This contradicts the statement of Theorem 2$(a)$ that this circle
contain only two eigenvalue, counting multiplicity, in its interior.

$(b)$ Suppose that there exists $t\in(t_{1},t_{2})$ \ such that $\lambda
_{n}^{+}(t)\notin\mathbb{R}$. It follows from Theorem 3 and (11) that
$\left\{  \lambda_{n}^{+}(t):t\in\lbrack t_{1},t_{2}]\right\}  \cup\left\{
\overline{\lambda_{n}^{+}(t)}:t\in\lbrack t_{1},t_{2}]\right\}  $ is a closed
curve lying in the spectrum. However, the spectrum of $L(Q)$ con not contain a
closed curve. Indeed, by (9), the spectrum $\sigma(L(Q))$ is the inverse image
of the interval $[-1,1]$ under the nonconstant entire function $F:\mathbb{C}%
\rightarrow\mathbb{C}.$

$(c)$ Suppose that there exist two double eigenvalues $\lambda_{n}^{+}%
(t_{1})=\lambda_{n}^{-}(t_{1})$ and $\lambda_{n}^{+}(t_{2})=\lambda_{n}%
^{-}(t_{2})$ such that for $t\in(t_{1},t_{2})$ the eigenvalues $\lambda
_{n}^{+}(t)$ and $\lambda_{n}^{-}(t)$ are simple, where $0\leq t_{1}<t_{2}<h$
and $n>N(h)$. By $(a)$, $\lambda_{n}^{+}(t_{1})$ and $\lambda_{n}^{+}(t_{2})$
are real. Without loss of generality, assume that $\lambda_{n}^{+}%
(t_{1})<\lambda_{n}^{+}(t_{2}).$ Then, by Theorem 3 and part $(b)$ the sets
$\left\{  \lambda_{n}^{+}(t):t\in\lbrack t_{1},t_{2}]\right\}  $ and $\left\{
\lambda_{n}^{-}(t):t\in\lbrack t_{1},t_{2}]\right\}  $ are intervals of the
real line containing the interval $[\lambda_{n}^{+}(t_{1}),\lambda_{n}%
^{+}(t_{2})]$. Therefore, for any point $c\in(\lambda_{n}^{+}(t_{1}%
),\lambda_{n}^{+}(t_{2}))$ there exist $d_{1},d_{2}\in(t_{1},t_{2})$ such that
$\lambda_{n}^{+}(d_{1})=\lambda_{n}^{-}(d_{2})=c.$ If $d_{1}=d_{2},$ then
$\lambda_{n}^{+}(d_{1})$ is a double eigenvalue, contradicting the assumption.
If $d_{1}\neq d_{2}$ then using (9), we obtain $\cos\pi d_{1}=\cos\pi d_{2},$
which is impossible because $d_{1},d_{2}\in\lbrack0,h).$

$(d)$ Replacing $\lambda_{n}^{+}(t)$ by $\lambda_{n}^{-}(t)$ throughout the
proofs of $(a)$-$(c),$ gives the proof of $(d).$

The theorem is proved.
\end{proof}

Instead of considering $[0,h)$ and $C(2n,3h),$ we can consider $(1-h,1]$ and
$C(2n+1,3h),$ respectively, and argue in the same way. We first introduce the
following notations.

\begin{notation}
By Theorem 2$(b),$ if $t\in\lbrack1-h,1],$ then the operator $L_{t}\left(
Q\right)  $ has two eigenvalues, counting multiplicities, inside the circle
$C(2n+1,3h)$ for $n>N(h).$ It follows from Summary 1 that, for $t\in
\lbrack1-h,1-\varepsilon_{n}]$ the eigenvalues of $L_{t}(Q)$ lying in
$C(2n+1,3h)$ are simple and real. Moreover, arguing as in Notation 1, we see
that they are$\ \lambda_{n}^{+}(t)$ and $\lambda_{n+1}^{-}(t)$ with
$\lambda_{n}^{+}(t)<\lambda_{n+1}^{-}(t)$ for $t\in\lbrack1-h,1-\varepsilon
_{n}].$ We use this notations for $t\in(1-h,1].$ If, for some $t\in(1-h,1],$
the eigenvalue lying in $C(2n+1,3h)$ is a double eigenvalue, then
$\lambda_{n+1}^{-}(t)=\lambda_{n}^{+}(t).$ If $C(2n+1,3h)$ contains two simple
eigenvalues of $L_{t}(Q)$, then we denote them again by $\lambda_{n}^{+}(t)$
and $\lambda_{n+1}^{-}(t)$ as follows. If they are real, then $\lambda_{n}%
^{+}(t)<\lambda_{n+1}^{-}(t).$ If they are not real, then they are complex
conjugates (see (11)). In this case, we choose the notation so that
$\operatorname{Im}\lambda_{n+1}^{-}(t)<0$ and $\operatorname{Im}\lambda
_{n}^{+}(t)>0.$
\end{notation}

Repeating the proof of Theorem 3 and 4, we obtain the following results for
$t\in\lbrack1-h,1]$ and $n>N(h).$

\begin{theorem}
$(a)$ The eigenvalue $\lambda_{n}^{+}(t)$ of $L_{t}(Q),$ for $n>N(h)$ can be
chosen as continuous function on $[1-h,1].$

$(b)$ For each $t\in\lbrack1-h,1]$ any double eigenvalue $\lambda_{n}^{+}(t)$
is real.

$(c)$ If $\lambda_{n}^{+}(t_{1})$ and $\lambda_{n}^{+}(t_{2})$ are real
numbers, where $1-h\leq t_{1}<t_{2}\leq1,$ then $\lambda_{n}^{+}(t)$ is real
for all $t\in\lbrack t_{1},t_{2}]$.

$(d)$ There exists at most one $t\in\lbrack1-h,1],$ such that $\lambda_{n}%
^{+}(t)$ is a double eigenvalue.

$(e)$ Statements $(a)$-$(d)$ remain valid if $\lambda_{n}^{+}(t)$ is replaced
by $\lambda_{n}^{-}(t).$
\end{theorem}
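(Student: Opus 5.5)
The plan is to transplant the proofs of Theorems 3 and 4 from the pair $(\lambda_{n}^{-},\lambda_{n}^{+})$ on $[0,h]$ inside $C(2n,3h)$ to the pair $(\lambda_{n}^{+},\lambda_{n+1}^{-})$ on $[1-h,1]$ inside $C(2n+1,3h)$. Throughout, Theorem 2$(b)$ plays the role of Theorem 2$(a)$, and Notation 2 replaces Notation 1.

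\textbf{Step 1: part $(a)$.} Since $F^{\prime}$ has finitely many zeros inside $C(2n+1,3h)$, there are finitely many points $1-h<t_{1}<\dots<t_{k}\leq1$ at which the eigenvalues of $L_{t}(Q)$ inside this circle coincide. On each complementary open interval the two eigenvalues are simple. By the implicit function theorem applied to (9), they are holomorphic in $t$ locally. The labelling of Notation 2 (real and ordered, or complex conjugate with $\operatorname{Im}\lambda_{n}^{+}>0$) is locally preserved, exactly as in the proof of Theorem 3. At each $t_{s}$, any limit point $\lambda$ of $\lambda_{n}^{+}(t)$ or $\lambda_{n+1}^{-}(t)$ satisfies $F(\lambda)=2\cos\pi t_{s}$, using the continuity of $F$. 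By Theorem 2$(b)$ this equation has only the double root inside the circle, so all one-sided limits agree and continuity follows. At $t=1-h$ the functions coincide with the real simple eigenvalues of Summary 1, so continuity there is immediate.

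\textbf{Step 2: parts $(b)$ and $(c)$.} For $(b)$, suppose a double eigenvalue in $C(2n+1,3h)$ is nonreal. By (11) its conjugate is also an eigenvalue of $L_{t}(Q)$ inside the circle, which would give more than two eigenvalues counting multiplicity and contradict Theorem 2$(b)$. For $(c)$, suppose $\lambda_{n}^{+}(t)\notin\mathbb{R}$ for some $t$ between two real values. Then $\{\lambda_{n}^{+}(t)\}\cup\{\overline{\lambda_{n}^{+}(t)}\}$ over $[t_{1},t_{2}]$ is a closed curve in $\sigma(L(Q))$, using the continuity from $(a)$ and (11). This is impossible because $\sigma(L(Q))=F^{-1}([-2,2])$ for a nonconstant entire $F$: the bounded region enclosed by the curve would have $F$ mapping its boundary into $[-2,2]$, and by the open mapping and maximum principles applied to a suitable function such as $\operatorname{Im}F$ (harmonic and zero on the boundary) this forces a contradiction.

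\textbf{Step 3: part $(d)$.} Suppose two consecutive double points $t_{1}<t_{2}$ exist in $[1-h,1]$. By $(b)$ and $(c)$ both branches are real on $[t_{1},t_{2}]$ and cover $[\lambda_{n}^{+}(t_{1}),\lambda_{n}^{+}(t_{2})]$, after relabelling so that this is a proper interval. Take any $c$ strictly inside. Then $c=\lambda_{n}^{+}(d_{1})=\lambda_{n+1}^{-}(d_{2})$ for some $d_{1},d_{2}\in(t_{1},t_{2})$. If $d_{1}=d_{2}$, this contradicts the simplicity of the eigenvalues on $(t_{1},t_{2})$. Otherwise (9) gives $\cos\pi d_{1}=\cos\pi d_{2}$, which is impossible since $\cos\pi t$ is injective on $[1-h,1]$.

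\textbf{Step 4: part $(e)$.} This follows by symmetry of the argument. The main obstacle is the equality case $\lambda_{n}^{+}(t_{1})=\lambda_{n}^{+}(t_{2})$ in $(d)$. There I would argue instead that a real branch taking the same value at two distinct $t$ again contradicts the injectivity of $\cos\pi t$. The closed-curve argument in $(c)$ also needs a careful justification, which I would handle via the maximum principle for $\operatorname{Im}F$ as sketched above.
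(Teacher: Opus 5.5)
Your proposal is correct and is essentially the paper's proof: the paper proves this theorem only by the instruction to repeat the proofs of Theorems 3 and 4, with Theorem 2$(b)$ and Notation 2 in place of Theorem 2$(a)$ and Notation 1, which is exactly your transplantation. Your additions---ruling out $\lambda_{n}^{+}(t_{1})=\lambda_{n}^{+}(t_{2})$ via injectivity of $\cos\pi t$, using $F^{-1}([-2,2])$ instead of the paper's $[-1,1]$, and the harmonic-function argument for $\operatorname{Im}F$ (cleanest on the Jordan domain bounded by a maximal nonreal subarc of $\lambda_{n}^{+}$ and its conjugate)---spell out details the paper leaves implicit.
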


Thus, it follows from Summary 1, Theorem 3 and Theorem 5$(a)$ that
$\lambda_{n}^{+}(t)$ and $\lambda_{n}^{-}(t)$ are continuous functions at the
intervals $[\varepsilon_{n},1-\varepsilon_{n}],$ $[0,h]$ and $[1-h,1],$
respectively. Moreover, the interval $[\varepsilon_{n},1-\varepsilon_{n}]$
overlaps with subintervals of both $[0,h]$ and $[1-h,1],$ since $\varepsilon
_{n}<h.$ Therefore, we have the following consequence

\begin{corollary}
For $n>N(h)$the functions $\lambda_{n}^{+}(t)$ and $\lambda_{n}^{-}(t)$ are
continuous on $[0,1].$
\end{corollary}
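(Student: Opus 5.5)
The plan is to glue together the three local continuity results on overlapping intervals. We are given continuity of $\lambda_{n}^{\pm}$ on three pieces:
\begin{itemize}
\item on $[\varepsilon_{n},1-\varepsilon_{n}]$, by Summary 1 together with Theorem 1;
\item on $[0,h]$, by Theorem 3;
\item on $[1-h,1]$, by Theorem 5$(a),(e)$.
\end{itemize}
Since $\varepsilon_{n}<h$ by (18), the first piece overlaps the second in $[\varepsilon_{n},h]$ and the third in $[1-h,1-\varepsilon_{n}]$. These overlaps are nondegenerate intervals, because $h<\frac{1}{10\pi}$ forces $h<1-h$. Their union is all of $[0,1]$.

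The only real issue is that the labelling must agree on each overlap. Otherwise the three functions could be different branches, and the glued function would jump. So I would first check consistency on $[\varepsilon_{n},h]$. For $t$ there, Summary 1 says the eigenvalues of $L_{t}(Q)$ inside $C(2n\pm t,\varepsilon_{n})$ are real and simple, and (13) gives $\lambda_{n}^{-}(t)<\lambda_{n}^{+}(t)$. Both circles lie inside $C(2n,3h)$, and Theorem 2$(a)$ says $C(2n,3h)$ contains exactly two eigenvalues. Hence these two eigenvalues are exactly the two eigenvalues in $C(2n,3h)$. Notation 1 labels the real pair in $C(2n,3h)$ by increasing order, so the two definitions coincide pointwise.

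The same argument works on $[1-h,1-\varepsilon_{n}]$ with the circle $C(2n+1,3h)$, using Notation 2. There the pair is $\lambda_{n}^{+}(t)<\lambda_{n+1}^{-}(t)$, and Notation 2 again orders them increasingly. This gives agreement of $\lambda_{n}^{+}$ on that overlap. For $\lambda_{n}^{-}$ near $t=1$, the relevant circle is $C(2n-1,3h)$, where the pair is $\lambda_{n-1}^{+},\lambda_{n}^{-}$. Its agreement follows from the same reasoning with $n$ replaced by $n-1$, which needs $n-1>N(h)$. If needed, I would enlarge $N(h)$ by one to cover this.

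Once the labels agree, the function defined piecewise on the three closed intervals is well defined. Its restriction to each of the three closed sets is continuous, and the sets form a finite closed cover of $[0,1]$. By the pasting lemma, $\lambda_{n}^{\pm}$ is continuous on $[0,1]$.

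The hard step is the label-consistency check: the pasting itself is routine once it is in place. Within it, the delicate point is the index shift for $\lambda_{n}^{-}$ near $t=1$. It has to be matched with Notation 2 applied at index $n-1$.
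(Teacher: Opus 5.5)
Your proof is correct and follows the paper's own argument: it takes continuity on $[\varepsilon_n,1-\varepsilon_n]$, $[0,h]$ and $[1-h,1]$ from Summary 1, Theorem 3 and Theorem 5, and glues these pieces across the overlaps guaranteed by $\varepsilon_n<h$. Your explicit label-consistency check on the overlaps, including the index shift to $n-1$ for $\lambda_n^-$ near $t=1$, spells out what the paper builds into Notations 1 and 2 rather than stating when it justifies the corollary.
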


It follows from this corollary that
\begin{equation}
\Gamma_{n}^{-}=\left\{  \lambda_{n}^{-}(t):t\in\lbrack0,1]\right\}  \text{
}\And\text{ }\Gamma_{n}^{+}=\left\{  \lambda_{n}^{+}(t):t\in\lbrack
0,1]\right\}  \tag{24}%
\end{equation}
are continuous curves for $n>N(h).$ Moreover, by Summary 1, a large portion of
these curves lies on the real axis. Denote by $a_{n}^{\pm}$ and $b_{n}^{\pm},$
respectively, the smallest number in $[0,\varepsilon_{n}]$ and the largest
number in $[1-\varepsilon_{n},1]$ such that $\lambda_{n}^{\pm}(a_{n}^{\pm})$
and $\lambda_{n}^{\pm}(b_{n}^{\pm})$ are the real numbers. \ 

\begin{theorem}
If $n>N(h),$ then the following statements hold:

$(a)$ The real parts $\Gamma_{n}^{+}\cap\mathbb{R}$ and $\Gamma_{n}^{-}%
\cap\mathbb{R}$ of the curves $\Gamma_{n}^{+}$ and $\Gamma_{n}^{-}$ are,
respectively $[\lambda_{n}^{+}(a_{n}^{+}),\lambda_{n}^{+}(b_{n}^{+})]$ and
$[\lambda_{n}^{-}(b_{n}^{-}),\lambda_{n}^{-}(a_{n}^{-})].$

$(b)$ If $a_{n}^{+}>0,$ then $a_{n}^{+}=a_{n}^{-},$ $\lambda_{n}^{+}(a_{n}%
^{+})$ is a double eigenvalue, $\lambda_{n}^{+}(a_{n}^{+})=\lambda_{n}%
^{-}(a_{n}^{+}),$ and the parts $\left\{  \lambda_{n}^{+}(r):t\in
\lbrack0,a_{n}^{+})\right\}  $ and $\left\{  \lambda_{n}^{-}(r):t\in
\lbrack0,a_{n}^{+})\right\}  $ of $\Gamma_{n}^{+}$ and $\Gamma_{n}^{-},$
respectively, are curves lying outside the real axis and symmetric with
respect to the real axis.

$(c)$ If $b_{n}^{+}<1,$ then $b_{n}^{+}=b_{n+1}^{-},\lambda_{n}^{+}(b_{n}%
^{+})$ is a double eigenvalue, and the part $\left\{  \lambda_{n}^{+}%
(t):t\in(b_{n}^{+},1]\right\}  $ and $\left\{  \lambda_{n+1}^{-}(r):t\in
(b_{n}^{+},1]\right\}  $ of $\Gamma_{n}^{+}$ and $\Gamma_{n+1}^{-},$
respectively, are curves lying outside the real axis and symmetric with
respect to the real axis.
\end{theorem}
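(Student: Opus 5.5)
The plan is to treat $\lambda_n^+(t)$ on $[0,1]$ by splitting the parameter interval into $[0,a_n^+]$, $[a_n^+,b_n^+]$ and $[b_n^+,1]$, and to use the realness propagation results, Theorem 4$(b)$ and Theorem 5$(c)$, together with the monotonicity argument of Theorem 1. The case of $\lambda_n^-$ is symmetric.

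\textbf{Middle interval.} First I show that $\lambda_n^+(t)$ is real exactly for $t\in[a_n^+,b_n^+]$.
\begin{itemize}
\item By definition, $\lambda_n^+(a_n^+)$ is real, and by Summary 1 so is $\lambda_n^+(t)$ for $t\in[\varepsilon_n,1-\varepsilon_n]$.
\item Since $a_n^+\le\varepsilon_n<h$, Theorem 4$(b)$ applied on $[a_n^+,h]$ gives $\lambda_n^+(t)\in\mathbb{R}$ for $t\in[a_n^+,h]$.
\item Likewise Theorem 5$(c)$ on $[1-h,b_n^+]$ gives realness there, so $\lambda_n^+(t)$ is real on all of $[a_n^+,b_n^+]$.
\item For $t<a_n^+$ it is nonreal by minimality of $a_n^+$, and for $t>b_n^+$ by maximality of $b_n^+$. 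Here I use that the set of $t$ with $\lambda_n^+(t)$ real is closed (continuity, Corollary 1), so the minimum and maximum exist.
\end{itemize}

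\textbf{Monotonicity on the middle interval.} Next I repeat the second proof of Theorem 1 on $[a_n^+,b_n^+]$. If $\lambda_n^+(t_1)=\lambda_n^+(t_2)$ with $t_1\neq t_2$, then (9) gives $\cos\pi t_1=\cos\pi t_2$, which is impossible on $[0,1]$. Hence $\lambda_n^+$ is a continuous, injective, real-valued function there, so it is strictly monotone. By (16) it is increasing, so its image is $[\lambda_n^+(a_n^+),\lambda_n^+(b_n^+)]$. The points with $t$ outside $[a_n^+,b_n^+]$ are nonreal, which proves $(a)$ for $\Gamma_n^+$. For $\Gamma_n^-$ the function is decreasing, which yields the reversed endpoints.

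\textbf{Part $(b)$.} Suppose $a_n^+>0$. Then $\lambda_n^+(t)\notin\mathbb{R}$ for $t\in[0,a_n^+)$, so by Notation 1 the pair $\lambda_n^\pm(t)$ is complex conjugate with $\operatorname{Im}\lambda_n^+>0$ and $\operatorname{Im}\lambda_n^-<0$.
\begin{itemize}
\item In particular $\lambda_n^-(t)=\overline{\lambda_n^+(t)}$ is also nonreal there, so $a_n^-\ge a_n^+$.
\item By continuity, $\lambda_n^+(a_n^+)=\lim_{t\to a_n^{+}-}\lambda_n^+(t)$ and $\lambda_n^-(a_n^+)=\lim \overline{\lambda_n^+(t)}=\overline{\lambda_n^+(a_n^+)}=\lambda_n^+(a_n^+)$, since the latter is real.
\item This common value is a double root of (9) inside $C(2n,3h)$, i.e.\ a double eigenvalue. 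It is real, so $\lambda_n^-(a_n^+)$ is real and $a_n^-\le a_n^+$; hence $a_n^-=a_n^+$.
\item The arcs over $[0,a_n^+)$ are conjugate to each other, hence symmetric with respect to the real axis, and they lie off the axis.
\end{itemize}

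\textbf{Part $(c)$.} The same argument applies with Notation 2, the circle $C(2n+1,3h)$, Theorem 2$(b)$ and Theorem 5, with the pair $\lambda_n^+,\lambda_{n+1}^-$ in place of $\lambda_n^+,\lambda_n^-$.

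\textbf{Main obstacle.} The delicate point is the consistency of labeling. Corollary 1 asserts continuity of $\lambda_n^+$ on all of $[0,1]$, but on $[0,h]$ the label is defined through the imaginary-part convention of Notation 1. I must make sure that when the pair becomes nonreal, $\lambda_n^+$ stays on the same side ($\operatorname{Im}>0$) throughout $[0,a_n^+)$. This follows because for $t<a_n^+$ there is no real crossing: by Theorem 4$(a)$ a double eigenvalue, being real, would force $\lambda_n^+$ to be real at that $t$, contradicting $t<a_n^+$. So $\operatorname{Im}\lambda_n^+(t)$ is continuous and nonvanishing on $[0,a_n^+)$ and keeps its sign.
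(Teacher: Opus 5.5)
Your proposal is correct and follows essentially the same route as the paper: realness on $[a_n^+,b_n^+]$ via Summary 1 and Theorems 4$(b)$, 5$(c)$, then injectivity from (9) together with (16) to get monotonicity, and for $(b)$ the conjugate pairing from Notation 1 and (11) with a continuity limit at $a_n^+$ that yields the real double eigenvalue. The differences are minor: you obtain $a_n^-=a_n^+$ from two inequalities rather than from the paper's direct observation that $\lambda_n^+(t)$ is real if and only if $\lambda_n^-(t)$ is, and you explicitly justify, via closedness and continuity, that the extremal parameters $a_n^\pm,b_n^\pm$ exist, which the paper leaves implicit.
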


\begin{proof}
$(a)$ If follows from the definition of $a_{n}^{+}$ and $b_{n}^{+}$ that both
$\lambda_{n}^{+}(a_{n}^{+})$ and $\lambda_{n}^{+}(b_{n}^{+})$ are real. Then
by Summary 1, Theorems 4$(b)$ and 5$(c),$ $\lambda_{n}^{+}(t)\in\mathbb{R},$
for all $t\in\lbrack a_{n}^{+},b_{n}^{+}].$ Therefore, using Corollary 1, and
repeating the \textbf{Second proof }of Theorem 1, with $[a_{n}^{+},b_{n}^{+}]$
using instead of $[\varepsilon_{n},1-\varepsilon_{n}],$ we obtain $(a).$

$(b)$ It follows from (11) and Notation 1 that, for $t\in\lbrack0,h],$
$\lambda_{n}^{+}(t)$ is real number if and only if $\lambda_{n}^{-}(t)$ is
real. Therefore, $a_{n}^{+}=a_{n}^{-}.$ Moreover, by the definition of
$a_{n}^{+},$ the curve $\left\{  \lambda_{n}^{+}(t):t\in\lbrack0,a_{n}%
^{+})\right\}  $ lies outside the real axis. Since $\lambda_{n}^{-}(t)$\ is
the complex conjugate of $\lambda_{n}^{+}(t)$ for $t\in\lbrack0,a_{n}^{+}),$
the curve$\left\{  \lambda_{n}^{-}(t):t\in\lbrack0,a_{n}^{+})\right\}  $ is
the symmetric to $\left\{  \lambda_{n}^{+}(t):t\in\lbrack0,t_{0})\right\}  $,
with respect to the real axis.

By the continuities of $\lambda_{n}^{-}(t)$ and $\lambda_{n}^{+}(t)$, we
obtain
\[
\text{ }\lim_{t\rightarrow a_{n}^{+}}\lambda_{n}^{+}(t)=\lambda_{n}^{+}%
(a_{n}^{+}),\text{ }\lambda_{n}^{-}(a_{n}^{+})=\lim_{t\rightarrow a_{n}^{+}%
}\lambda_{n}^{-}(t)=\lim_{t\rightarrow a_{n}^{+}}\overline{\lambda_{n}^{+}%
(t)}=\overline{\lambda_{n}^{+}(a_{n}^{+})}.
\]
This implies that $\lambda_{n}^{+}(a_{n}^{+})=\lambda_{n}^{-}(a_{n}^{+}),$
since $\lambda_{n}^{+}(a_{n}^{+})$ is real. Thus, $\lambda_{n}^{+}(a_{n}^{+})$
is a double eigenvalue.

$(c)$ Instead of Notation 1 using Notation 2 and repeating the proof of $(b),$
we obtain the proof of $(c).$
\end{proof}

\begin{remark}
Note that, in case of the Schr\"{o}dinger operator, the points $\lambda
_{n}^{+}(a_{n}^{+})$ and $\lambda_{n}^{+}(b_{n}^{+}),$ \ where $a_{n}^{+}>0$
and $b_{n}^{+}<1,$ are called complexation points (see Definition 1 and
Theorem 4(c) of [12]). Moreover, arguing as in the proof of Proposition 2 of
[11] and Theorem 3 of [14], one can easily see that these points are spectral
singularities of $L(Q).$
\end{remark}

Now, using Theorem 6, we show that the number of gaps in the real part of the
spectrum of the PT-symmetric Dirac operator depends on the periodic and
antiperiodic eigenvalues. For $n>N(h),$ there are three possible cases:

\begin{case}
$\lambda_{n}^{+}(0)$ is a double eigenvalue, that is, $\lambda_{n}%
^{-}(0)=\lambda_{n}^{+}(0).$ Then by Theorem 4$(a),$ $\lambda_{n}^{+}(0)$ a real.
\end{case}

\begin{case}
$\lambda_{n}^{+}(0)\notin\mathbb{R}$. Then $\operatorname{Im}\lambda_{n}%
^{+}(0)>0$ and $\lambda_{n}^{-}(0)=\overline{\lambda_{n}^{+}(0)}$ (see
Notation 1).
\end{case}

\begin{case}
$\lambda_{n}^{+}(0)$ is a simple real eigenvalue. Then $\lambda_{n}^{-}(0)$ is
also a simple real eigenvalue and $\lambda_{n}^{-}(0)<\lambda_{n}^{+}(0).$
\end{case}

First, let us prove the following result.

\begin{theorem}
$(a)$ In Case 1 and Case 2, the following equality holds:%
\begin{equation}
\left(  \Gamma_{n}^{-}\cap\mathbb{R}\right)  \cup\left(  \Gamma_{n}^{+}%
\cap\mathbb{R}\right)  =[\lambda_{n}^{-}(b_{n}^{-}),\lambda_{n}^{+}(b_{n}%
^{+})]. \tag{25}%
\end{equation}
$(b)$ In Case 3, the intervals $\Gamma_{n}^{-}\cap\mathbb{R}$ and $\Gamma
_{n}^{+}\cap\mathbb{R}$ are separated by the gap $(\lambda_{n}^{-}%
(0),\lambda_{n}^{+}(0)).$
\end{theorem}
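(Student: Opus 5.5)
The plan rests on Theorem 6$(a)$, which already gives
\[
\Gamma_{n}^{+}\cap\mathbb{R}=[\lambda_{n}^{+}(a_{n}^{+}),\lambda_{n}^{+}(b_{n}^{+})] \quad\text{and}\quad \Gamma_{n}^{-}\cap\mathbb{R}=[\lambda_{n}^{-}(b_{n}^{-}),\lambda_{n}^{-}(a_{n}^{-})].
\]
The statement therefore reduces to comparing the two inner endpoints $\lambda_{n}^{-}(a_{n}^{-})$ and $\lambda_{n}^{+}(a_{n}^{+})$. Since the second interval lies to the left of the first by (13), in Cases 1 and 2 I will show that the two inner endpoints coincide, so the two intervals abut. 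In Case 3 I will show that the inner endpoints are exactly $\lambda_{n}^{-}(0)<\lambda_{n}^{+}(0)$, so the open interval between them is missing.

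For Case 1, the eigenvalue $\lambda_{n}^{+}(0)$ is real, and by Theorem 4$(a)$ so is $\lambda_{n}^{-}(0)$. By the definition of $a_{n}^{\pm}$ as the smallest parameter giving a real value, $a_{n}^{+}=a_{n}^{-}=0$. The inner endpoints are then $\lambda_{n}^{+}(0)=\lambda_{n}^{-}(0)$, because the eigenvalue is double. The union of the two intervals is therefore $[\lambda_{n}^{-}(b_{n}^{-}),\lambda_{n}^{+}(b_{n}^{+})]$, which is (25).

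For Case 2, $\lambda_{n}^{+}(0)\notin\mathbb{R}$, so $a_{n}^{+}>0$. Note that $a_{n}^{+}\le\varepsilon_{n}$ exists because $\lambda_{n}^{+}(\varepsilon_{n})$ is real by Summary 1. Theorem 6$(b)$ then gives $a_{n}^{+}=a_{n}^{-}$ and $\lambda_{n}^{+}(a_{n}^{+})=\lambda_{n}^{-}(a_{n}^{+})$, so the inner endpoints again coincide and (25) follows as in Case 1.

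For Case 3, both $\lambda_{n}^{\pm}(0)$ are real, so $a_{n}^{\pm}=0$ and the intervals are $[\lambda_{n}^{-}(b_{n}^{-}),\lambda_{n}^{-}(0)]$ and $[\lambda_{n}^{+}(0),\lambda_{n}^{+}(b_{n}^{+})]$, with $\lambda_{n}^{-}(0)<\lambda_{n}^{+}(0)$. It remains to check that no point of $(\lambda_{n}^{-}(0),\lambda_{n}^{+}(0))$ lies on $\Gamma_{n}^{\pm}$. This is immediate from the interval descriptions in Theorem 6$(a)$: the first interval lies to the left of $\lambda_{n}^{-}(0)$ and the second to the right of $\lambda_{n}^{+}(0)$.

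The step needing the most care is the orientation of these intervals, namely that $\lambda_{n}^{+}$ increases and $\lambda_{n}^{-}$ decreases along $[a_{n},b_{n}]$. This follows from the second proof of Theorem 1 combined with the endpoint ordering from Summary 1 and (13). It guarantees that $\lambda_{n}^{-}(0)$ is the right endpoint of $\Gamma_{n}^{-}\cap\mathbb{R}$ and $\lambda_{n}^{+}(0)$ is the left endpoint of $\Gamma_{n}^{+}\cap\mathbb{R}$.

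I would also remark that in Case 3 the gap is a genuine gap of $\sigma(L(Q))\cap\mathbb{R}$ near $2n$. By Theorem 2$(a)$, the only eigenvalues of any $L_{t}(Q)$ near $2n$ are $\lambda_{n}^{\pm}(t)$. I would note this point but not belabor it, since the statement only concerns $\Gamma_{n}^{\pm}$.
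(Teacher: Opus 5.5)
Your proposal is correct and follows the paper's proof essentially step for step. You read off the two intervals from Theorem 6$(a)$, get $a_n^{\pm}=0$ in Cases 1 and 3 and use Theorem 6$(b)$ in Case 2 to identify the inner endpoints, and then compare them. One small wording point: in Case 1 it is Theorem 4$(a)$ that makes the double eigenvalue $\lambda_n^{+}(0)=\lambda_n^{-}(0)$ real, rather than reality passing from $\lambda_n^{+}(0)$ to $\lambda_n^{-}(0)$.
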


\begin{proof}
$(a)$ First consider Case 1. It follows from the definition of $a_{n}^{\pm}$
that, $a_{n}^{\pm}=0.$ Then, by Theorem 6 $(a),$ we have%
\begin{equation}
\Gamma_{n}^{-}\cap\mathbb{R=}[\lambda_{n}^{-}(b_{n}^{-}),\lambda_{n}%
^{-}(0)]\text{ }\And\Gamma_{n}^{+}\cap\mathbb{R}=[\lambda_{n}^{+}%
(0),\lambda_{n}^{+}(b_{n}^{+})]. \tag{26}%
\end{equation}
Since $\lambda_{n}^{-}(0)=\lambda_{n}^{+}(0)$ and $\lambda_{n}^{-}(b_{n}%
^{-})<\lambda_{n}^{+}(b_{n}^{+}),$ (25) holds.

Now, consider Case 2. In this case, $a_{n}^{+}>0.$ Then, by Theorem 6 $(b),$
$\lambda_{n}^{-}(a_{n}^{-})=\lambda_{n}^{+}(a_{n}^{+}).$ Therefore (25)
follows from Theorem 6$(a)$.

$(b)$ Arguing as in the Case 1, we see that (26) holds. Moreover, in Case 3,
we have $\lambda_{n}^{-}(0)<\lambda_{n}^{+}(0).$ Therefore, these intervals
are separated by the gaps $(\lambda_{n}^{-}(0),\lambda_{n}^{+}(0)).$ The
theorem is proved.
\end{proof}

Now, the main result of this paper follows from Proposition 1 and Theorem 7.

\begin{theorem}
$(a)$ The spectrum of $L(Q)$ has finitely many gaps if and only if the number
of the $2\pi$-periodic SR eigenvalues is finite.

$(b)$ The spectrum of $L(Q)$ has finitely many gaps if and only if all but
finitely many $2\pi$-periodic eigenvalues are either double eigenvalues or
nonreal eigenvalues.
\end{theorem}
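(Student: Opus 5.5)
Here "SR eigenvalues" means the simple real $2\pi$-periodic eigenvalues, that is, Case 3 for the periodic eigenvalues and its analogue for the antiperiodic ones. With this reading, part $(b)$ is just part $(a)$ restated via the trichotomy Case 1/Case 2/Case 3. Each periodic eigenvalue with large $|n|$ is either double, or nonreal, or simple real. The same holds for each antiperiodic eigenvalue, using Notation 2 and Theorem 5. So it suffices to prove $(a)$. I treat $n>N(h)$; the case $n<-N(h)$ is symmetric. Only finitely many $2\pi$-periodic eigenvalues lie outside the circles $C(2n,3h)$ and $C(2n+1,3h)$ with $|n|>N(h)$, since $F(\lambda)=\pm2$ has finitely many roots in any bounded region. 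Hence finiteness of SR eigenvalues is equivalent to: for all large $n$, the pair $\lambda_n^\pm(0)$ is in Case 1 or Case 2, and the pair $\lambda_n^+(1),\lambda_{n+1}^-(1)$ is in the analogous antiperiodic Case 1 or 2.

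For sufficiency, assume that for $n>N_0$ no SR eigenvalue occurs in $C(2n,3h)$ or $C(2n+1,3h)$. Theorem 7$(a)$ gives $(\Gamma_n^-\cap\mathbb R)\cup(\Gamma_n^+\cap\mathbb R)=[\lambda_n^-(b_n^-),\lambda_n^+(b_n^+)]$. For the antiperiodic end I apply the analogue of Theorem 7$(a)$, obtained from Notation 2, Theorem 5 and Theorem 6$(c)$ in place of Notation 1, Theorem 4 and Theorem 6$(b)$. This gives $(\Gamma_n^+\cap\mathbb R)\cup(\Gamma_{n+1}^-\cap\mathbb R)=[\lambda_n^+(a_n^+),\lambda_{n+1}^-(a_{n+1}^-)]$, and in particular $\lambda_n^+(b_n^+)=\lambda_{n+1}^-(b_{n+1}^-)$. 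Chaining these over consecutive $n>N_0$ gives one interval $[\lambda_{N_0+1}^-(b_{N_0+1}^-),\infty)$ contained in $\sigma(L(Q))$. Together with Summary 1$(b)$, this shows that the intervals (12) lie in the spectrum for $n>N_0$. Proposition 1 then yields finitely many gaps.

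For necessity, suppose infinitely many SR eigenvalues exist. Then for infinitely many $n$, either the periodic pair is in Case 3 or the antiperiodic pair is in its Case 3. In the periodic situation, Theorem 7$(b)$ gives the gap $(\lambda_n^-(0),\lambda_n^+(0))$ between $\Gamma_n^-\cap\mathbb R$ and $\Gamma_n^+\cap\mathbb R$. The main obstacle is to show that this open interval really misses all of $\sigma(L(Q))\cap\mathbb R$, not only $\Gamma_n^\pm$. I would argue as follows. By (10) and Theorem 2, any real spectral point near $2n$ is $\lambda_n^\pm(t)$ for some $t$, or lies on another $\Gamma_m^{\pm}$. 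By (13) and Summary 1, the curves $\Gamma_m^\pm$ with $m\neq n$ stay near $[2m-1,2m+1]$, away from $C(2n,3h)$ except through their real intervals, which are ordered. Theorem 6$(a)$ shows that $\Gamma_n^\pm\cap\mathbb R$ are exactly $[\lambda_n^-(b_n^-),\lambda_n^-(0)]$ and $[\lambda_n^+(0),\lambda_n^+(b_n^+)]$. Hence $(\lambda_n^-(0),\lambda_n^+(0))$ is a genuine gap. The antiperiodic case is identical, using the analogue of Theorem 7$(b)$. The resulting gaps are distinct for distinct $n$, so there are infinitely many gaps, and $(a)$ follows.
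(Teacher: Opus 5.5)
Your proposal is correct and follows the paper's route. You reduce $(b)$ to $(a)$ via the trichotomy of Cases 1--3, obtain sufficiency from Theorem 7$(a)$ and its antiperiodic analogue, and obtain necessity from Theorem 7$(b)$. The differences are minor: for sufficiency the paper places $\lambda_n^-(b_n^-)$ and $\lambda_n^+(b_n^+)$ inside $C(2n\mp1,3h)$, covers the intervals (12) and applies Proposition 1, whereas your chaining into a half-line makes Proposition 1 unnecessary. For necessity, your extra paragraph showing that $(\lambda_n^-(0),\lambda_n^+(0))$ misses all of $\sigma(L(Q))$ spells out what the paper leaves implicit, and like the paper it tacitly assumes that for large $n$ every Bloch eigenvalue near $2n$ is one of $\lambda_n^\pm(t)$.
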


\begin{proof}
Since there are only three cases, namely Cases 1, 2, and 3, statements $(a)$
and $(b)$ are equivalent. We prove $(b).$ By Theorem 7$(a),$ (25) holds. This
implies that
\[
\lbrack\lambda_{n}^{-}(b_{n}^{-}),\lambda_{n}^{+}(b_{n}^{+})]\subset
\sigma(L(Q)).
\]
On the other hand it follows from the definition of $b_{n}^{-}$ and $b_{n}%
^{+}$ that $b_{n}^{-},$ $b_{n}^{+}$ $\in\lbrack1-\varepsilon_{n},1].$
Therefore, by Notation 2, $\lambda_{n}^{-}(b_{n}^{-})$ and $\lambda_{n}%
^{+}(b_{n}^{+})$ lie inside $C(2n-1,3h)$ and $C(2n+1,3h),$ respectively. In
other words, $\lambda_{n}^{-}(b_{n}^{-})<2n-1+3h$ and $\lambda_{n}^{+}%
(b_{n}^{+})>2n+1-3h.$ Consequently,
\[
(2n-2\varepsilon_{n-1},2n+2\varepsilon_{n})\subset\lbrack\lambda_{n}^{-}%
(b_{n}^{-}),\lambda_{n}^{+}(b_{n}^{+})]\subset\sigma(L(Q)),
\]
since $2\varepsilon_{n-1}+3h<1$ and $2\varepsilon_{n}+3h<1.$ In the same way,
we obtain
\[
(2n+1-2\varepsilon_{n},2n+1+2\varepsilon_{n})\subset\sigma(L(Q)).
\]
Thus, the theorem follows from Proposition 1.
\end{proof}

Since nonreal eigenvalues occur for a large class of non-self-adjoint Dirac
operator, the following consequence of Theorem 8 provides a possibility of
constructing a large class of PT-symmetric finite-zone potentials.

\begin{corollary}
The spectrum of $L(Q)$ has finitely many gaps if all but a finitely many
$2\pi$-periodic eigenvalues are nonreal.
\end{corollary}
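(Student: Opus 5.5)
This is a direct specialization of Theorem 8$(b)$, so the plan is to derive Corollary 2 from the ``if'' direction of that theorem. The hypothesis is that all but finitely many $2\pi$-periodic eigenvalues are nonreal. This immediately gives the condition ``all but finitely many $2\pi$-periodic eigenvalues are either double eigenvalues or nonreal eigenvalues.'' Theorem 8$(b)$ then says the spectrum of $L(Q)$ has finitely many gaps.

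To make the argument self-contained at the level of the paper's case analysis, I would spell out why the hypothesis places each large index in the admissible cases. For $n>N(h)$ the periodic eigenvalues near $2n$ are $\lambda_n^{\pm}(0)$, by Theorem 2$(a)$ and Notation 1. The antiperiodic eigenvalues near $2n+1$ are $\lambda_n^+(1)$ and $\lambda_{n+1}^-(1)$, by Theorem 2$(b)$ and Notation 2. The asymptotics (17)–(18) show that all but finitely many periodic and antiperiodic eigenvalues lie in these circles. So only finitely many indices $n$ can carry an exceptional real eigenvalue.

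Fix an $N_0\ge N(h)$ beyond which all these eigenvalues are nonreal.

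\begin{itemize}
\item \textbf{Periodic side.} For $n>N_0$, $\lambda_n^+(0)\notin\mathbb{R}$, which is Case 2 and never Case 3. Theorem 7$(a)$ then gives (25).
\item \textbf{Antiperiodic side.} The same reasoning applies, using Notation 2 and Theorem 6$(c)$. Nonreal $\lambda_n^+(1)$ forces $b_n^+<1$, so $b_n^+=b_{n+1}^-$ and $\lambda_n^+(b_n^+)=\lambda_{n+1}^-(b_{n+1}^-)$. Hence no gap opens near $2n+1$.
\end{itemize}

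Following the proof of Theorem 8, the intervals (12) are then contained in $\sigma(L(Q))$ for $|n|>N_0$, with the case $n<-N_0$ treated symmetrically. Proposition 1 then yields the claim.

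The only point needing care is that the hypothesis ``nonreal'' excludes Case 3 for every large $n$, both at $t=0$ and at $t=1$. Case 3 requires a real simple eigenvalue, so this exclusion is immediate. I therefore expect no real obstacle: the corollary is a one-line consequence of Theorem 8$(b)$.
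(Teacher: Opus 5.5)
Your proposal is correct and follows the paper's route exactly: Corollary 2 is presented as an immediate consequence of the ``if'' direction of Theorem 8$(b)$, since ``nonreal'' is a special case of ``double or nonreal''. Your unpacking through Cases 1--3, Theorem 7$(a)$ and Theorem 6$(c)$ simply retraces the proof of Theorem 8.

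One minor remark: this direction does not need your claim that (17)--(18) place all but finitely many $2\pi$-periodic eigenvalues inside the circles $C(k,3h)$; those estimates only hold in the strip $\left\vert \operatorname{Im}\lambda\right\vert <a$. The disjointness of the circles already guarantees that only finitely many of them can contain one of the finitely many real exceptions.
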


Now, to describe briefly the results of Theorem 8, let us introduce the
following notation. Let $\lambda_{k}^{\pm}$ be the eigenvalues of $L_{0}$ (the
periodic eigenvalues) for even $k$ ($k=2n$), or of $L_{1}$ (the antiperiodic
eigenvalues) for odd $k$ ($k=2n+1$) lying in $C(k,3h).$ We denote these
eigenvalues so that $\operatorname{Re}\lambda_{k}^{-}<\operatorname{Re}%
\,\lambda_{k}^{+}.$ If $\operatorname{Re}\,\lambda_{k}^{-}=\operatorname{Re}%
\,\lambda_{k}^{+}$ then $\operatorname{Im}\lambda_{k}^{-}\leq\operatorname{Im}%
\lambda_{k}^{+}.$ Note that if these eigenvalues are not real, then they are
complex conjugates $\,\lambda_{k}^{+}=\overline{\lambda_{k}^{-}}$ (see (11)).
With this notation Theorem 8 can be formulated as follows.

\begin{theorem}
The spectrum of $L(Q)$ has finitely many gaps if and only if there exist
$K>2N(h)$, such that $\operatorname{Re}\gamma_{k}=0,$ for $\left\vert
k\right\vert >K,$ where $\gamma_{k}=\lambda_{k}^{+}-\lambda_{k}^{-}.$
\end{theorem}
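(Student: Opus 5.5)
The plan is to reduce Theorem 9 to Theorem 8$(b)$. The work is to show that, for $|k|>K$ with $K>2N(h)$, the condition $\operatorname{Re}\gamma_k=0$ is exactly the statement that the pair $\lambda_k^{\pm}$ is in Case 1 or Case 2. I first match the notation. For $k=2n$ with $n>N(h)$, Theorem 2$(a)$ says that the two periodic eigenvalues inside $C(2n,3h)$ are $\lambda_n^-(0)$ and $\lambda_n^+(0)$ of Notation 1. For $k=2n+1$, Theorem 2$(b)$ says that the two antiperiodic eigenvalues inside $C(2n+1,3h)$ are $\lambda_n^+(1)$ and $\lambda_{n+1}^-(1)$ of Notation 2. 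In both situations, (11) and Theorem 4$(a)$ (respectively Theorem 5$(b)$) show that the pair is of exactly one of three types: a double real eigenvalue, a nonreal complex-conjugate pair, or two distinct simple real eigenvalues. These are Cases 1, 2, 3, and for odd $k$ their analogues with $t=1$.

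Next I compute $\gamma_k=\lambda_k^+-\lambda_k^-$ in each case using the ordering convention stated just before the theorem.
\begin{itemize}
\item In Case 1, $\lambda_k^+=\lambda_k^-$, so $\gamma_k=0$.
\item In Case 2, $\lambda_k^+=\overline{\lambda_k^-}$. The real parts agree, and the convention $\operatorname{Im}\lambda_k^-\le\operatorname{Im}\lambda_k^+$ gives $\gamma_k=2i\operatorname{Im}\lambda_k^+$, which is purely imaginary. So $\operatorname{Re}\gamma_k=0$.
\item In Case 3, both eigenvalues are real and distinct with $\lambda_k^-<\lambda_k^+$, so $\operatorname{Re}\gamma_k=\gamma_k>0$.
\end{itemize}
Hence $\operatorname{Re}\gamma_k=0$ holds if and only if the pair is in Case 1 or Case 2, that is, if and only if the $2\pi$-periodic eigenvalues near $k$ are either double or nonreal. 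The one subtlety is Case 2: two conjugate eigenvalues have equal real parts, so whichever way they are labelled, $\operatorname{Re}\gamma_k=0$; the ordering convention fixes only the sign of $\operatorname{Im}\gamma_k$ and does not affect the argument.

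Then I apply Theorem 8$(b)$. If the spectrum has finitely many gaps, then all but finitely many $2\pi$-periodic eigenvalues are double or nonreal. All $2\pi$-periodic eigenvalues with large real part lie in the circles $C(k,3h)$, by Theorem 2 together with the localization in Summary 1. Hence there is $K>2N(h)$ such that every pair with $|k|>K$ is in Case 1 or Case 2, and so $\operatorname{Re}\gamma_k=0$ for $|k|>K$. Conversely, if $\operatorname{Re}\gamma_k=0$ for all $|k|>K$, then no pair beyond $K$ is in Case 3. The pairs with $|k|\le K$, together with the low-lying eigenvalues outside the circles, contribute only finitely many eigenvalues. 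So all but finitely many $2\pi$-periodic eigenvalues are double or nonreal, and Theorem 8$(b)$ gives finitely many gaps. The case $k<-2N(h)$ is handled symmetrically, as the paper indicates for $n<-N_0$.

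I expect the main obstacle to be the bookkeeping for odd $k$. There the two antiperiodic eigenvalues in $C(2n+1,3h)$ come from two different curves, $\Gamma_n^+$ and $\Gamma_{n+1}^-$. So I need to check that Cases 1 to 3, and therefore the equivalence with Theorem 8$(b)$, transfer via Notation 2 and Theorem 5 exactly as they do at $t=0$. The proof of Theorem 8 already handles this by saying "in the same way", so I will invoke that symmetric argument rather than redo it.
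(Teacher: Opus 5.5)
Your proposal is correct and is essentially the paper's argument. The paper proves Theorem 9 by the one-line remark that $\operatorname{Re}\gamma_k=0$ holds exactly in Cases 1 and 2, which reduces the statement to Theorem 8$(b)$; your case-by-case computation of $\gamma_k$, with the odd-$k$ pairs handled through Notation 2 and Theorem 5, spells this out, and your localization step (all large $2\pi$-periodic eigenvalues lie in the circles $C(k,3h)$) is also taken for granted in the paper's proof of Theorem 8.
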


The proof is clear, since $\operatorname{Re}\gamma_{k}=0$ if and only if
either Case 1 or Case 2 occurs.

In [13] using asymptotic formulas obtained in [2, 9] for the periodic and
antiperiodic eigenvalues of the Schr\"{o}dinger operator
\[
S(q)=-\frac{d^{2}}{dx^{2}}+q,
\]
we obtained explicit conditions on the Fourier coefficient of $q$ under which
the spectrum of $S(q)$ has finitely many gaps. Djakov and Mityagin obtained
numerous formulas and estimates (see, for instance [3, 4] and the references
therein) for $\gamma_{k},$ $\lambda_{k}^{+}$ and $\lambda_{k}^{-}$. For this,
they proved that for large enough $|n|,$ the number $\lambda=n+z,$ with
$|z|\leq1/4$ is a $2\pi$-periodic eigenvalue of the Dirac operator $L(Q)$ if
and only if $z$ satisfies the basic equation
\[
(z-\alpha_{n}(z))^{2}=\beta_{n}^{+}(z)\beta_{n}^{-}(z),
\]
where $\alpha_{n}(z)$ and $\beta_{n}^{\pm}(z)$ are well defined for large
enough $|n|$ by explicit expressions in terms of the Fourier coefficients of
the potential entries with respect to the system $\{e^{imx},\,m\in
2\mathbb{Z}\}.$ Using Theorem 9 together with the formulas and estimates
obtained in [3, 4] and references therein, one can consider explicit condition
on the potential $Q$ under which the number of gaps in $\sigma(L(Q))$ is finite.

\end{document}